\newtheorem*{thmA}{Theorem A}
\newtheorem{theorem}{Theorem}[section]
\newtheorem{lemma}[theorem]{Lemma}
\newtheorem{proposition}[theorem]{Proposition}
\newtheorem*{ThmB}{Theorem B}
\newtheorem*{ThmC}{Theorem C}
\theoremstyle{definition}
\newtheorem{example}[theorem]{Example}
\newtheorem{rem}[theorem]{Remark}
\newtheorem{defn}[theorem]{Definition}
\newenvironment{enumeratei}{\begin{enumerate}[\upshape (a)]}
    {\end{enumerate}}
\def\frat#1{{\bf \Phi}(#1)}
\def\cent#1#2{{\bf C}_{#1}(#2)}
\def\hall#1#2{{\rm Hall}_#1(#2)}
\def\syl#1#2{{\rm Syl}_#1(#2)}
\def\nor{\trianglelefteq\,}
\def\norm#1#2{{\bf N}_{#1}(#2)}
\def\oh#1#2{{\bf O}_{#1}(#2)}
\def\zent#1{{\bf Z}(#1)}
\def\sbs{\subseteq}
\def\fit#1{{\bf F}(#1)}
\def\V#1{{\rm V}(#1)}
\def\o#1{\overline{#1}}
\begin{document}

\title[Class graphs with a cut vertex]{Groups whose prime graph on class sizes has a cut vertex}

\author[S. Dolfi et al.]{Silvio Dolfi}
\address{Silvio Dolfi, Dipartimento di Matematica e Informatica U. Dini,\newline
Universit\`a degli Studi di Firenze, viale Morgagni 67/a,
50134 Firenze, Italy.}
\email{dolfi@math.unifi.it}

\author[]{Emanuele Pacifici}
\address{Emanuele Pacifici, Dipartimento di Matematica F. Enriques,
\newline Universit\`a degli Studi di Milano, via Saldini 50,
20133 Milano, Italy.}
\email{emanuele.pacifici@unimi.it}

\author[]{Lucia Sanus}
\address{Lucia Sanus, Departament de Matem\`atiques, Facultat de
 Matem\`atiques, \newline
Universitat de Val\`encia,
46100 Burjassot, Val\`encia, Spain.}
\email{lucia.sanus@uv.es}

\author[]{V\'ictor Sotomayor}
\address{V\'ictor Sotomayor, Instituto Universitario de Matem\'atica Pura y Aplicada (IUMPA-UPV), \newline Universitat Polit\`ecnica de Val\`encia, Camino de Vera s/n, 46022 Valencia, Spain.}
\email{vicorso@doctor.upv.es}

\thanks{The research of the first and  second  author is partially supported by the Italian PRIN 2015TW9LSR\_006 ``Group Theory and Applications". The research of the third  author is supported by  the Spanish  Ministerio de Economia y Competitividad proyecto MTM2016-76196-P,  partly with  FEDER funds. The  fourth author acknowledges the support of both the grant ACIF/2016/170 from Generalitat Valenciana (Spain) and the prize Borses Ferran Sunyer i Balaguer 2019 from Institut d'Estudis Catalans (Spain).}

\keywords{Finite Groups; Conjugacy Classes; Prime Graph}

\subjclass[2010]{20E45}

\begin{abstract}
Let \(G\) be a finite group, and let \(\Delta(G)\) be the \emph{prime graph} built on the set of conjugacy class sizes of \(G\): this is the simple undirected graph whose vertices are the prime numbers dividing some conjugacy class size of \(G\), two vertices \(p\) and \(q\) being adjacent if and only if \(pq\) divides some conjugacy class size of \(G\). In the present paper, we classify the finite groups \(G\) for which \(\Delta(G)\) has a \emph{cut vertex}.
\end{abstract}

\maketitle

\section{Introduction}


Let \(\Delta\) be a graph with \(n\) connected components; denoting by \(V\) the vertex set of \(\Delta\), an element \(v\in V\) is called a \emph{cut vertex} of \(\Delta\) if the number of connected components of the subgraph induced by \(V\setminus\{v\}\) in \(\Delta\) (i.e., the graph obtained by removing the vertex $v$ and all edges incident to $v$ from $\Delta$) is larger than \(n\).
If $\Delta$ is connected and it has a cut-vertex, then $\Delta$ is said to be \emph{$1$-connected}. 

Now, given a finite group \(G\), we consider the graph $\Delta(G)$ defined as follows:
its vertex set \(\V G\) consists of the prime numbers dividing the size of some conjugacy class of $G$, and two vertices $p$ and $q$ are adjacent in $\Delta(G)$ if and only if there exists a conjugacy class of $G$ having size divisible by the product $p q$. A well-established research field in the theory of finite groups investigates the interplay between graph-theoretical properties of \(\Delta(G)\) and the structure of \(G\) itself (see, for instance, the items in the References), and the present paper is a contribution in this framework; more specifically, our aim here is to describe the finite groups \(G\) such that the graph \(\Delta(G)\) has a cut vertex.
Note that, under this assumption, \(\Delta(G)\) is in fact $1$-connected. This follows from Theorem~4 of \cite{D0}: if \(\Delta(G)\) is disconnected then (it has two connected components and) the connected components are complete subgraphs, so \(\Delta(G)\) cannot have any cut vertex in this case.

We will show that $\Delta(G)$ has at most two cut vertices, and we will provide a
complete characterization of the structure of the group $G$, as well as of the graph \(\Delta(G)\), in both the cases when \(\Delta(G)\) has either one or two cut vertices. In the following statements, given a graph $\Delta$ with vertex set $V$, for $r\in V$ we denote by $\Delta -r$ the subgraph induced by \(V\setminus\{r\}\) in \(\Delta\); recall also that the vertex \(r\) of \(\Delta\) is called \emph{complete} if it is adjacent to all the other vertices of~\(\Delta\). 

\begin{thmA}
  Let $G$ be a finite group such that \(\Delta(G)\) has a cut vertex \(r\). Then the following conclusions hold.
  \begin{enumeratei} 
  \item \(G\) is a solvable group whose Fitting height is at most \(3\).
  \item $\Delta(G) - r$ is a graph with two connected components, that are both complete graphs. 
  \item If \(r\) is a complete vertex of \(\Delta(G)\), then it is the unique complete vertex and the unique cut vertex of \(\Delta(G)\). If \(r\) is non-complete, then \(\Delta(G)\) is a graph of diameter \(3\), and it can have at most two cut vertices.
  \end{enumeratei}
\end{thmA}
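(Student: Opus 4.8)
The plan is to obtain parts (b) and (c) as essentially graph-theoretic consequences of a few known facts about class-size prime graphs, and to reserve the real work for the structural part of (a). Throughout, $r$ denotes the cut vertex. Since a disconnected $\Delta(G)$ admits no cut vertex (the consequence of \cite{D0} recalled above), $\Delta(G)$ is connected and $\Delta(G)-r$ has at least two connected components; fix a partition $\V G\setminus\{r\}=A\sqcup B$ of the remaining vertices into two non-empty unions of components, so that $\Delta(G)$ has no edge joining $A$ to $B$, and observe that — by connectedness — $r$ has a neighbour inside each component, in particular inside $A$ and inside $B$. Apart from \cite{D0}, I would invoke: (I) a non-solvable group has no cut vertex in its class-size graph (this should be available in the literature; one knows, for instance, that $\Delta(S)$ is complete for $S$ non-abelian simple); (II) for solvable $G$, $\Delta(G)$ has no independent set of size three — a P\'alfy-type theorem for class sizes; and (III) a connected class-size graph has diameter at most $3$.

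Granting these, part (a) begins easily: if $G$ were non-solvable, (I) would contradict the existence of $r$, so $G$ is solvable. (Should a clean reference for (I) be unavailable, I would argue it by hand: pass to a non-abelian chief factor $N/L\cong S^k$; since $|(xL)^{G/L}|$ divides $|x^G|$ for every $x$, the complete graph $\Delta(S^k)$ is a subgraph of $\Delta(G)$, so any cut vertex $r$ would push all primes dividing $|S^k|$ onto one side of the partition, and a CFSG-assisted analysis of how the remaining primes can attach rules this out.) With solvability in hand, part (b) follows from (II): if $\Delta(G)-r$ had three components $C_1,C_2,C_3$, representatives $c_i\in C_i$ would form three pairwise non-adjacent vertices of $\Delta(G)$, which is forbidden; and if a component $A$ were not complete, a non-edge inside $A$ together with any vertex of $B$ would again be an independent triple. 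Hence $\Delta(G)-r$ has exactly the two components $A$ and $B$, both complete.

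For (c), $\Delta(G)$ is now a disjoint union of two cliques $A,B$ with $r$ joined to non-empty subsets $N_A\subseteq A$, $N_B\subseteq B$. If $r$ is complete ($N_A=A$ and $N_B=B$), deleting any $v\ne r$ leaves a connected graph (every vertex still reaches $r$), so $r$ is the unique cut vertex; and no $v\in A$ is complete, since it misses the non-empty $B$, and symmetrically in $B$, so $r$ is the unique complete vertex. If $r$ is non-complete, pick $v\ne r$ with $v\not\sim r$, say $v\in A$ (then $|A|\ge2$, else $v$ would be isolated); choosing $p\in N_A$ and $q\in N_B$, the path $v$--$p$--$r$--$q$ has length $3$, and there is no shorter $v$--$q$ path (every neighbour of $v$ lies in $A$ and none is adjacent to $q\in B$), so by (III) the diameter is exactly $3$. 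Lastly, a cut vertex $s\ne r$ inside $A$ can exist only when $|A|\ge2$ and $N_A=\{s\}$, in which case the components of $\Delta(G)-s$ are $A\setminus\{s\}$ and $B\cup\{r\}$; applying part (b) to $s$ forces $B\cup\{r\}$ complete, i.e.\ $N_B=B$. A symmetric cut vertex inside $B$ would instead force $N_A=A$ together with $N_B$ a singleton and $|B|\ge2$; as these cannot coexist, at most one of $A,B$ contains a cut vertex, so $\Delta(G)$ has at most two cut vertices overall.

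The genuinely hard part — and where I expect most of the paper's technical machinery to go — is the Fitting-height bound $\h G\le3$ in (a) for solvable $G$; writing $F=\fit G$, one must exclude $G/\fitd G\ne1$. The idea is that the partition $\V G\setminus\{r\}=A\sqcup B$ along which $\Delta(G)$ is pinched can accommodate only boundedly many Fitting layers: by centralizer and coprime-action arguments on the chief factors (the class-size analogues of Camina- and Ito--Michler-type results), a prime first appearing in the $i$-th layer is forced to be adjacent in $\Delta(G)$ to primes of neighbouring layers, so a fourth layer would produce an edge straddling $A$ and $B$. Turning this heuristic into a proof — pinning down exactly which adjacencies are compelled, and handling the degenerate layers that contribute no new vertex — is the main obstacle.
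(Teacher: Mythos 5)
Your handling of parts (b) and (c) is correct and, in fact, more detailed than the paper's: the paper deduces (b) from the two-clique cover of the vertex set (Corollary~B of \cite{DPSS}, Theorem~\ref{ultimo} above), while you use the no-independent-triple property; both routes work, and note that the triple property is available for arbitrary finite groups by \cite{CD2}, so you would not even need to establish solvability before using it. Your counting of cut vertices and the diameter-three argument also agree with the paper's.

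The genuine gap is part (a), and it is the part you flag yourself. Your ingredient (I) --- that a non-solvable group admits no cut vertex in \(\Delta(G)\) --- is not an off-the-shelf quotable result, and the sketch you give is not a proof: knowing that the primes of a non-abelian chief factor span a complete subgraph only places them inside one side of the pinch, which by itself does not obstruct a cut vertex, so the entire content of the claim is deferred to an unspecified ``CFSG-assisted analysis''. More seriously, the bound \(\h G\le 3\) is left purely as a heuristic about Fitting layers. The paper obtains both conclusions in one stroke, with no new machinery: from the dichotomy you already proved in (c), \(\Delta(G)\) has at most one complete vertex (if \(r\) is complete it is the only one; if \(r\) is not, then no vertex is complete, since every vertex of one clique misses the other), and Theorem~A of \cite{CDPS12} says precisely that a finite group whose class-size prime graph has at most one complete vertex is solvable of Fitting height at most \(3\). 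The missing idea is therefore to extract the ``at most one complete vertex'' observation from your own case analysis and invoke that theorem, rather than attacking non-solvable chief factors and Fitting layers directly; as written, part (a) of your argument is incomplete.
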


In order to state Theorem~B, which provides a much deeper description of the finite groups \(G\) such that \(\Delta(G)\) is \(1\)-connected, we need to introduce some terminology. We say that a finite group \(G\) is \emph{reduced}  if it does not have any non-trivial normal (equivalently, central) subgroup \(Z\) with \(G'\cap Z=1\). As one quickly realizes (see Proposition~\ref{rid}), the set of conjugacy class sizes of $G$ is the same as the set of conjugacy class sizes of the factor group $G/Z$ by any such subgroup \(Z\); moreover, it is not difficult to see that if \(Z\) is maximal with respect to the above property, then \(G/Z\) is reduced. In view of these remarks, it is sensible and not restrictive to focus on reduced groups. 


\begin{ThmB}
  Let $G$ be a finite reduced group. Then the  graph $\Delta(G)$ has a cut vertex $r$ if and only if, denoting by \(\alpha\) and \(\beta\) the vertex sets of the two complete connected components of \(\Delta(G)-r\), we have
  $G = ABR$ where $A \in \hall{\alpha}G$, $B \in \hall{\beta}G$, $R \in \syl rG$ are all non-trivial, $AB$ is an $r$-complement of $G$, $A$ and $B$ are abelian, and (up to interchanging \(\alpha\) and \(\beta\)) one of the following holds.

\medskip
  \begin{enumeratei}
  \item[{\bf{(I)}}] The Fitting subgroup $\fit G$  is $R$, and the set $\beta$ consists of a single prime $q$. Also, \(\fit{AB}=A\) is cyclic, and $|B| = q$, so \(G\) is nilpotent by metacyclic, of Fitting height \(3\). Furthermore, 
    for all $x \in R$, either

   \begin{description}
\item[(i)]    $A^y \leq \cent Gx$ for some $y \in R$ or
\item[(ii)]    $B^g \leq \cent Gx$ for some $g \in G$ and $\cent Ax \leq \zent {AB}$.
   \end{description}
  
  \bigskip
  \item[{\bf{(II)}}] $\fit G = A \times R$ (so \(G\) is nilpotent by abelian, in fact metabelian if \(R\) is abelian), $Z = \zent {AB} < B$,  $AB/Z$  is a Frobenius group with kernel \(AZ/Z\), and
    either
    \begin{description}
    \item[(IIa)] $R$ is abelian, $\cent BR = 1$, $Z \neq 1$ and $\cent Bx \leq Z$ for every non-trivial $x \in R$; or

    \item[(IIb)] $R$ is non-abelian and either
      \begin{description}
      \item[(IIb(i))] $G = R \times AB$; or
      \item[(IIb(ii))] $\cent Bx \leq Z$ for all $x \in R$ such that $\cent Gx R < G$.
      \end{description}
    \end{description}
    
 \bigskip   
  \item[{\bf{(III)}}] 
   Up to replacing \(R\) by a \(G\)-conjugate of it, we have that \(BR\) is a nilpotent subgroup of \(G\); furthermore, $\fit G = A \times R_0$ with $R_0 < R$, $\cent AR = 1$, 
and $[A,B]BR/R_0$ is a Frobenius group with kernel \([A,B]R_0/R_0\). In particular, \(G\) is metanilpotent, in fact metabelian if \(R\) is abelian; in this case, we also have $R_0 = 1$ and $\cent AB \neq~1$.

  \end{enumeratei}
\end{ThmB}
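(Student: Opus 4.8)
The statement is a biconditional whose two halves are of quite different character: the ``if'' direction is a (lengthy but essentially mechanical) computation of conjugacy class sizes, whereas the ``only if'' direction carries the structural weight. I would prove them separately, keeping in force throughout the following observation. Since $G$ is reduced, every prime dividing $|G|$ is a vertex of $\Delta(G)$: a prime $p$ with $p\nmid|x^G|$ for all $x\in G$ has central Sylow subgroup $P\le\zent G$, which by coprimality splits off as a direct factor $G=P\times K$, whence $G'\le K$ and $G'\cap P=1$, against reducedness. Thus $\V G=\alpha\cup\beta\cup\{r\}$ is a partition into non-empty sets and, $G$ being solvable by Theorem~A, $G=ABR$ with $A\in\hall\alpha G$, $B\in\hall\beta G$, $R\in\syl rG$ all non-trivial and $AB$ an $r$-complement of $G$. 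Note also that for $p\in\alpha$ and $q\in\beta$ the primes $p$ and $q$ lie in different connected components of $\Delta(G)-r$, hence $p\not\sim q$ in $\Delta(G)$.

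For the ``if'' direction I would assume $G$ has one of the shapes (I), (II), (III) and read off $\Delta(G)$ directly. In cases (II) and (III) one has $A\nor G$ and $\oh rG\nor G$, so the decomposition of the elements of $A\cdot\oh rG$ is rigid and $\cent G{ax}=\cent Ga\cap\cent Gx$; combining this with the prescribed fixed-point-free action of the $r$-complement on $A$ (modulo $\zent{AB}$, resp. modulo $R_0$) and with the explicit centralizer hypotheses ($\cent Bx\le Z$ for the relevant $x\in R$, $\cent BR=1$, $\cent AR=1$, nilpotency of $BR$, the alternatives in (IIb)), one checks that every conjugacy class size factors as an $\alpha$-part times a $\beta$-part times an $r$-part, and --- crucially --- that an $\alpha$-prime and a $\beta$-prime never divide the same class size, while $r$ is joined to vertices of both $\alpha$ and $\beta$; that $\alpha$ and $\beta$ themselves induce complete subgraphs uses that $A$ and $B$ are abelian. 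In case (I) the same bookkeeping is done with $\fit G=R$, $A$ cyclic and $|B|$ prime, the dichotomy (i)/(ii) on the elements of $R$ being exactly what forbids an $\alpha$--$\beta$ edge. In every case the outcome is that $r$ is a cut vertex and $\Delta(G)-r$ has the two complete components $\alpha$ and $\beta$.

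For the ``only if'' direction, assume $\Delta(G)$ has the cut vertex $r$. Theorem~A already gives that $G$ is solvable with $\h G\le 3$ and that $\Delta(G)-r$ has two connected components, both complete, with vertex sets $\alpha$ and $\beta$; as noted, $p\not\sim q$ for all $p\in\alpha$, $q\in\beta$. The core of the argument is to convert this total non-adjacency between $\alpha$-primes and $\beta$-primes, via known structure theory for solvable groups with disconnected class-size prime graph (cf. \cite{D0}), into information about $AB$ and about the action of $R$ on it: this should force $A$ and $B$ to be abelian, severely constrain how $R$, $A$, $B$ interlock, and bring out the Frobenius-type behaviour of the $r$-complement modulo its centre, together with, in the generic situation, the normality in $G$ of one of the two Hall subgroups (which, after interchanging the names $\alpha$ and $\beta$ if necessary, we take to be $A$).

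The analysis then bifurcates according to the position of $\fit G$ relative to $R$, equivalently according to whether $\oh{r'}G$ is trivial. If $\oh{r'}G=1$ then $\fit G$ is an $r$-group; pressing the bound $\h G\le 3$ together with the completeness of $\alpha$, $\beta$ forces $\fit G=R$, $\fit{AB}=A$ cyclic and $|B|=q$ prime, which is case~(I); the two alternatives for an element $x\in R$ come from asking whether $\cent Gx$ contains a $G$-conjugate of $A$ or a $G$-conjugate of $B$, and one shows that exactly one of these must occur for each $x$ --- otherwise $x^G$ would be divisible by both an $\alpha$-prime and a $\beta$-prime. If instead $\oh{r'}G\ne 1$, the non-adjacency forces $A\nor G$ and $\oh\beta G=1$, so $A\le\fit G$; one then separates the sub-case $\oh rG=R$, where $\fit G=A\times R$, yielding case~(II) with a further split according to whether $R$ is abelian --- giving (IIa) --- or not --- giving (IIb), in which (IIb(i)) is the degenerate splitting $G=R\times AB$ and (IIb(ii)) is governed by $\cent Bx\le Z$ for the ``bad'' elements; from the sub-case $\oh rG=R_0<R$, where one proves $\cent AR=1$, $\fit G=A\times R_0$ and, after conjugating $R$, that $BR$ is nilpotent, yielding case~(III) and the Frobenius condition on $[A,B]$ modulo $R_0$. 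The principal obstacle in the whole ``only if'' part is the treatment of a non-abelian $R$: unlike $A$ and $B$, $R$ need not be abelian, so the elements $x\in R$ whose centralizer ``reaches'' a conjugate of $A$ or of $B$ do not form a subgroup, and the delicate conditions in (I) and (IIb) must be verified element by element while tracking $G$-fusion inside $R$; a secondary difficulty is to rule out the degenerate configurations and to pin down the small-scale numerics ($|B|$ prime in (I); $R_0=1$ exactly when $R$ is abelian in (III)), which forces one to play the Fitting-height bound off against the completeness of the two components and against reducedness.
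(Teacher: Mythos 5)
Your overall frame is the same as the paper's (solvability from Theorem~A, Hall subgroups $A,B,R$, non-adjacency of every $\alpha$-prime to every $\beta$-prime giving $A,B$ abelian, and an ``if'' direction done by standard decompositions of elements), but in the ``only if'' direction your plan replaces precisely the steps where the real work happens with assertions that the cited tools cannot deliver. The decisive gap is your case $\oh{r'}G=1$: you claim that the bound $\h G\le 3$ together with the completeness of $\alpha$ and $\beta$ ``forces'' $\fit G=R$, $A$ cyclic, $\beta=\{q\}$ and $|B|=q$. Nothing of the sort follows from these facts; in the paper this is exactly Theorem~4.1, whose proof first manufactures a single subgroup $K=K_p(G)$ valid for all $p\in\alpha$ (Proposition~\ref{pieces}, resting on the $K_p$-machinery of \cite{DPSS} and Proposition~\ref{gamma}), then embeds $G/\cent G{K/L}$ into the semilinear group $\Gamma(K/L)$ via Manz--Wolf, and finally uses Zsygmondy primitive prime divisors twice (Steps~1 and~3) to pin the top section down to order exactly $q$; only from that do $|\beta|=1$, $|B|=q$ and the cyclicity of $A$ follow. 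The Fitting-height bound is far too coarse for this (it cannot even exclude $|\beta|\ge 2$), so this part of your plan is not a compression of the argument but an omission of it.

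The second gap is in your case $\oh{r'}G\neq 1$, where you assert that non-adjacency ``forces $A\nor G$ and $\oh{\beta}G=1$.'' Non-adjacency alone does not (type {\bf(I)} groups satisfy it with $A$ non-normal), and $\oh{r'}G\neq1$ only provides some non-trivial normal $p$-subgroup, not a normal Sylow subgroup. The paper's actual pivot is $\nu(G)$: it assumes some $t\in\alpha\cup\beta$ has a normal Sylow subgroup, shows $t$ must lie in $\alpha$, and then proves $\alpha\subseteq\nu(G)$ through another $K_p$/Proposition~\ref{gamma} argument; with your dichotomy you would in addition have to exclude the configuration $\oh{r'}G\neq1$ with $\nu(G)\cap(\alpha\cup\beta)=\emptyset$, which needs the type-{\bf(I)} analysis anyway, so your bifurcation buys nothing and hides a step. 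Similarly, in your subcase $\oh rG<R$ you silently need $AB$ to be a normal $r$-complement to reach {\bf(III)}; in the paper this comes from Lemma~\ref{three} (i.e.\ \cite[Theorem 24]{BDIP}) plus a class-size argument showing $AB$ is non-abelian, and your outline supplies no mechanism for it. Finally, two smaller inaccuracies in the ``if'' direction: the paper does not obtain adjacency of $r$ to both sides by bookkeeping --- in cases {\bf(I)} and {\bf(IIb(ii))} it deduces that $r$ is complete from Theorem~C of \cite{CDPS12} (no complete vertex would force $G$ metabelian), and connectivity in {\bf(IIa)}/{\bf(III)} uses Proposition~\ref{disconnected}, which is where $Z\neq1$, respectively $\cent AB\neq1$, enters --- and in {\bf(I)} the alternatives (i)/(ii) are not ``exactly one''; moreover the clause $\cent Ax\le\zent{AB}$ in (ii) has to be derived, and its use in the converse verification depends essentially on $A$ being cyclic.
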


In Section~5 we will discuss the various types of groups that appear in Theorem~B, and we will describe the structure of the relevant graphs. We will also see, in Example~\ref{ex}, that the graphs having a cut vertex, which can be realized as $\Delta(G)$ for some finite group $G$, are precisely the $1$-connected graphs whose vertices are covered by two complete subgraphs.

It turns out that the finite reduced groups $G$ for which $\Delta(G)$ has two cut vertices constitute a subclass of the groups described in {\bf(IIa)} of Theorem~B (with respect to one of the cut vertices, whereas they are a subclass of {\bf(III)} with respect to the other; see Remark~\ref{thmC}). In the following statement, we refer to the notation introduced in Theorem~B.

\begin{ThmC}
  Let $G$ be a finite reduced group. Then the  graph $\Delta(G)$ has two distinct cut vertices \(r\) and \(t\) if and only if, with respect to \(r\) (say), the following holds.
  \begin{enumeratei}
  \item \(G\) is as in {\bf(IIa)} of Theorem~{\rm B}, with \(t\) lying in \(\beta\).
  \item Denoting by \(B^*\) the Hall \(t'\)-subgroup of \(B\), we have \(B^*\leq Z\), and \(B\) acts fixed-point freely (by conjugation) on \([R,B^*]\).
\end{enumeratei}
\end{ThmC}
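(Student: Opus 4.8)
The plan is to prove both implications by first extracting, via Theorem~A, the precise shape that $\Delta(G)$ must have when it carries two cut vertices, and then confronting this shape with the classification of Theorem~B and the description of the graphs $\Delta(G)$ of its various types (as developed in Section~5). So assume $\Delta(G)$ has two distinct cut vertices $r\ne t$. By Theorem~A(iii) neither is a complete vertex, and $\Delta(G)-r$ has two complete connected components, say with vertex sets $P$ and $Q$ where $t\in Q$. Since $\Delta(G)$ is connected, $r$ has a neighbour in each of $P$ and $Q$, and there is no edge between $P$ and $Q$. If $r$ had a neighbour in $Q\setminus\{t\}$, then $P\cup\{r\}\cup(Q\setminus\{t\})$ would be connected in $\Delta(G)-t$, contradicting that $t$ is a cut vertex; hence $N_{\Delta(G)}(r)\cap Q=\{t\}$, so $|Q|\ge 2$, the two components of $\Delta(G)-t$ are exactly $P\cup\{r\}$ and $Q\setminus\{t\}$, and $P\sbs N_{\Delta(G)}(r)$ (since $P\cup\{r\}$ is complete). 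Thus $\Delta(G)$ is a ``double broom'': disjoint cliques on $P$ and on $Q$, plus $r$ joined to all of $P$ and, inside $Q$, to $t$ alone; by symmetry the analogous statement holds from $t$, with the cliques $P\cup\{r\}$ and $Q\setminus\{t\}$.

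Next I would apply Theorem~B with respect to $r$, obtaining $G=ABR$ as there with one of (I), (II), (III) holding for a suitable labelling $\{\alpha,\beta\}=\{P,Q\}$, and rule out everything but (IIa). Invoking the graph descriptions of Section~5: in case (I) (where $\fit G=R$ and $A=\fit{AB}$ is cyclic) every prime of the larger clique, being a vertex of $\Delta(G)$, occurs in a class size together with $r$, and $r$ is also adjacent to the prime of the singleton clique (by connectivity), so $r$ is complete, against Theorem~A(iii); in (IIb(i)) we have $G=R\times AB$ with $R$ non-abelian, which makes $r$ adjacent to all of $\V{AB}$, again complete; in (IIb(ii)) the elements of $R$ centralise $A$, so that the $r$-divisible class sizes coming from such elements carry the full $\beta$-part forced by $\cent Bx\le Z$, and one again finds $r$ complete; finally, $G$ cannot be of type (III) with respect to \emph{both} cut vertices, as is seen by comparing the two decompositions of $\fit G$ this would give. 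Hence $G$ is of type (II), and therefore of type (IIa) since (IIb) is excluded, with respect to at least one of the two cut vertices, which we call $r$; so $R$ is abelian, $R\nor G$, $\fit G=A\times R$, $Z=\zent{AB}<B$, $AB/Z$ is Frobenius with kernel $AZ/Z$, $\cent BR=1$ and $\cent Bx\le Z$ for every $1\ne x\in R$. Choosing $1\ne b\in B\setminus Z$ gives $\cent Ab=1=\cent Rb$ (the first by the Frobenius action of $B/Z$ on $A$, the second by $\cent Bx\le Z$), so the class of $b$ has size $|A|\cdot|R|$, divisible by $r$ and by every prime of $\alpha$; thus $\alpha\sbs N_{\Delta(G)}(r)$, so $\alpha$ is the clique $r$ is fully joined to, forcing $\beta=Q$, $t\in\beta$ and $|\beta|\ge 2$. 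This is conclusion (a).

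It then remains to show, for $G$ of type (IIa) with $t\in\beta$, that $N_{\Delta(G)}(r)\cap\beta=\{t\}$ — which, by the shape above, is exactly ``$t$ is a second cut vertex'' — holds if and only if (b) holds. Since $\fit G=A\times R$ with $R$ abelian and $A$ centralising $R$, a class of $G$ has $r$-divisible size precisely when its elements lie outside $\fit G$, i.e.\ (up to conjugacy, using that $|B|$ is coprime to $|\fit G|$) have the form $cb$ with $1\ne b\in B$ and $c=c_Ac_R\in\cent Ab\times\cent Rb$; a routine computation then yields $\cent G{cb}=(\cent Ab\times\cent Rb)\rtimes(\cent B{c_A}\cap\cent B{c_R})$, so $|G:\cent G{cb}|=|A:\cent Ab|\cdot|R:\cent Rb|\cdot|B:\cent B{c_A}\cap\cent B{c_R}|$. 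For $b\notin Z$ this forces $c=1$ and a $\beta$-trivial class size, while for $1\ne b\in Z$ the $\beta$-parts arising are exactly the divisors of $|B/Z|$ and of $|B:\cent Bx|$ for $1\ne x\in\cent Rb$. Hence a prime $q\in\beta$ lies in $N_{\Delta(G)}(r)$ if and only if the Sylow $q$-subgroup $B_q$ of $B$ satisfies $B_q\not\le Z$, or $B_q\not\le\cent Bx$ for some $1\ne b\in Z$ and some $1\ne x\in\cent Rb$. It follows that $N_{\Delta(G)}(r)\cap\beta$ is a single prime $t$ exactly when $B^*$, the Hall $t'$-subgroup of $B$, satisfies $B^*\le Z$ and $B^*$ centralises $\bigcup_{1\ne b\in Z}\cent Rb$; and because each $\cent Rb$ with $b\in Z$ is normalised by $B$ (so decomposes along $R=\cent R{B^*}\times[R,B^*]$), the latter condition — using $\cent Bx\le Z$ for $1\ne x\in R$ — is equivalent to $B$ acting fixed-point-freely on $[R,B^*]$. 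This is (b); note that (b) forces $[R,B^*]\ne 1$, hence $B^*\ne 1$ (because $\cent BR=1$) and $|\beta|\ge 2$. Reading these equivalences in reverse, and using the ``if'' direction of Theorem~B (which already gives that $r$ is a cut vertex), proves the converse.

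The conceptual skeleton above — shape from Theorem~A, then matching with Theorem~B — is straightforward, and I expect the genuine difficulty to lie in the last paragraph: the centraliser bookkeeping in $G=(A\times R)\rtimes B$, and, above all, the step identifying ``$B^*$ centralises $\bigcup_{1\ne b\in Z}\cent Rb$'' with ``$B$ acts fixed-point-freely on $[R,B^*]$'', whose only non-routine ingredient is the $B$-invariance of the subgroups $\cent Rb$ with $b\in Z$. The case elimination in the second paragraph is routine once the Section~5 graph descriptions are available, but requires care about which of $\alpha,\beta$ is the clique $r$ is fully joined to, and about excluding type (III) with respect to both cut vertices simultaneously.
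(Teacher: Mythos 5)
Your proposal is correct and follows essentially the same route as the paper: Theorem~A pins down the ``double broom'' shape of \(\Delta(G)\), Theorem~B together with the Section~5 graph descriptions rules out types {\bf(I)}, {\bf(IIb)} and ``type {\bf(III)} with respect to both cut vertices'', and the decisive step is the adjacency criterion between \(r\) and the primes of \(\beta\) in case {\bf(IIa)}, which the paper establishes prime by prime (\(Q\le Z\) and \(B\) fixed-point-free on \([R,Q]\)) and you recover equivalently via the explicit formula \(\cent G{cb}=(\cent Ab\times\cent Rb)\rtimes(\cent B{c_A}\cap\cent B{c_R})\), then aggregate to the Hall subgroup \(B^*\) exactly as the paper's Remark on Theorem~C does implicitly. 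The only point needing the same care as in the paper is the degenerate case \(B^*=1\) (i.e.\ \(\beta=\{t\}\)), which you handle, as the paper does, by reading ``\(B\) acts fixed-point-freely on \([R,B^*]\)'' as entailing \([R,B^*]\neq 1\), so that \(|\beta|\ge 2\) and removing \(t\) genuinely disconnects the graph.
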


We briefly digress with the following remark. One problem that may be of interest, concerning the graph \(\Delta(G)\), is to \emph{understand the situation when \(\Delta(G)\) does not contain any cycle}. This property clearly holds if the graph has at most two vertices; moreover, as we have a complete control of the case when \(\Delta(G)\) is disconnected (via Theorem~4 of \cite{D0}), the relevant question in this context is to \emph{classify the finite reduced groups \(G\) such that \(\Delta(G)\) is acyclic, connected, with at least three vertices}. It turns out that this problem is strongly related to our present discussion. 

In fact, since the vertices of \(\Delta(G)\) can be partitioned in two subsets each inducing a complete subgraph (\cite[Corollary B]{DPSS}), it is easily seen that \(\Delta(G)\) has at most four vertices if it is acyclic. Therefore, the purpose is to describe the groups \(G\) for which \(\Delta(G)\) is a path of length two or three. In both cases \(\Delta(G)\) has a cut vertex (actually two of them in the latter case), therefore Theorem~B and Theorem~C enable us to complete this classification.

To close with, we mention that the study of cut vertices for the \emph{character degree graph} (i.e. the graph obtained by considering the degrees of irreducible characters, instead of the sizes of the conjugacy classes) has been carried out by M.L. Lewis and Q. Meng in~\cite{LM}. 

All the groups considered in the following discussion are tacitly assumed to be finite groups.




\section{Preliminary results}

For a positive integer \(n\), we define \(\pi(n)\) to be the set of prime divisors of \(n\); if \(G\) is a group, \(\pi(G)\) will stand for \(\pi(|G|)\).

Next, we gather some well-known facts concerning conjugacy class sizes of a group.
Given an element $x$ of the group $G$, denote by $x^G$ the conjugacy class of $x$ in $G$, and by \(\pi_G(x)\) the set of prime divisors of \(|x^G|\): if $N$ is a normal subgroup of  $G$ then, for any $x \in G$, we have
$\pi_{G/N}(xN) \subseteq \pi_G(x)$ and, for $y \in N$, we have $\pi_N(y) \subseteq \pi_G(y)$.  Another elementary remark is that a prime number $p$ does not belong to \(\V G\) if and only if $G$ has a central Sylow $p$-subgroup. Furthermore, the following holds.

\begin{proposition}
\label{rid}
Let \(Z\) be a normal subgroup of \(G\)   such that \(G'\cap Z=1\). Then $Z \leq \zent G$ and the set of conjugacy class sizes of \(G/Z\) is the same as the set of conjugacy class sizes of \(G\).
\end{proposition}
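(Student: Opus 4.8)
The plan is to prove the two assertions in turn, the key tool in both cases being a short commutator computation that exploits the hypothesis $G'\cap Z=1$ together with the normality of $Z$.

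First I would establish that $Z\le\zent G$. Pick $z\in Z$ and $g\in G$. Since $Z\nor G$ we have $z^g\in Z$, so the commutator $[z,g]=z^{-1}z^g$ lies in $Z$; on the other hand $[z,g]\in G'$ by definition, hence $[z,g]\in G'\cap Z=1$. As $g$ was arbitrary this gives $z\in\zent G$, and therefore $Z\le\zent G$. (Note that for a normal $Z$, the conditions $G'\cap Z=1$ and ``$Z$ central with $G'\cap Z=1$'' are thus equivalent, which is the parenthetical remark preceding the statement.)

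Next I would compare centralizers in $G$ and in $\o G:=G/Z$. Writing $\o x:=xZ$ for $x\in G$, the claim is that $\cent{\o G}{\o x}=\cent Gx/Z$. The inclusion $\supseteq$ is immediate, since $Z\le\zent G\le\cent Gx$ makes $\cent Gx/Z$ a genuine subgroup of $\o G$ consisting of elements commuting with $\o x$. For $\subseteq$, if $\o g\in\cent{\o G}{\o x}$ then $[x,g]\in Z$; since also $[x,g]\in G'$, the same argument as above forces $[x,g]\in G'\cap Z=1$, so $g\in\cent Gx$ and $\o g\in\cent Gx/Z$. With this identification in hand,
\[ |\o x^{\,\o G}| \;=\; \frac{|\o G|}{|\cent{\o G}{\o x}|} \;=\; \frac{|G|/|Z|}{|\cent Gx|/|Z|} \;=\; \frac{|G|}{|\cent Gx|} \;=\; |x^G| \]
for every $x\in G$. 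Finally, since the canonical projection $G\to\o G$ is surjective, every conjugacy class of $\o G$ is of the form $\o x^{\,\o G}$ for some $x\in G$, so the displayed equality shows at once that every class size of $\o G$ occurs as a class size of $G$ and conversely; hence the two sets of conjugacy class sizes coincide. I do not expect a genuine obstacle here — the only point worth stating carefully is that $Z$ being central is by itself not enough: one really needs $G'\cap Z=1$ to upgrade the a priori conclusion $[x,g]\in Z$ to $[x,g]=1$, which is exactly what makes the centralizer correspondence size-preserving.
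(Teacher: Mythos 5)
Your proof is correct and follows essentially the same route as the paper: both deduce $Z\le\zent G$ from $[G,Z]\le G'\cap Z=1$ and then establish the centralizer correspondence $\cent{G/Z}{xZ}=\cent Gx/Z$ via the same commutator argument, with you merely spelling out the resulting class-size computation that the paper leaves implicit.
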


\begin{proof} As $[G, Z ] \leq G'\cap Z$, it is clear that $Z$ is contained in the center of $G$. It will be enough to show that, for every \(x\in G\), we have \(\cent{G/Z}{xZ}=\cent G x/Z\). In fact, if \(yZ\) lies in \(\cent{G/Z}{xZ}\), we get \([x,y]\leq G'\cap Z=1\), and therefore \(y\) lies in \(\cent G x\); this proves that \(\cent{G/Z}{xZ}\subseteq \cent G x/Z\), and equality clearly holds.
\end{proof} 

As mentioned in the Introduction, the group \(G\) is said to be \emph{reduced} if it does not have any non-trivial subgroup \(Z\) as in the hypothesis of the above proposition, and it not restrictive to focus on reduced groups for the purposes of this paper. Note that, for a reduced group \(G\), we have \(\V G=\pi(G)\).

In the following proposition, we recall the description of the groups $G$ such that $\Delta(G)$ is disconnected.

\begin{proposition}[\mbox{\cite[Theorem~4]{D0}}]
  \label{disconnected}
  Let $G$ be a group. Then 
  the graph $\Delta(G)$ is disconnected if and only if $G=AB$, where
  $A\trianglelefteq G$ and $B$ are abelian Hall subgroups of $G$ of coprime order, and
  $G/Z$, where $Z = \zent G$, is a Frobenius group with Frobenius kernel \(AZ/Z\).
  In this case $\Delta(G)$ has two connected components, with vertex sets \(\pi(AZ/Z)\) and \(\pi(BZ/Z)\) respectively, that are both complete. 
  
\end{proposition}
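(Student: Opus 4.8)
The plan is to prove the two implications separately. For the ``if'' direction I would determine the full set of conjugacy class sizes of $G$ from the Frobenius structure of the quotient $G/Z$, where $Z=\zent G$. For the ``only if'' direction I would analyze the constraints imposed by the fact that every class size is then a $\pi$-number or a $\pi'$-number for a fixed set of primes $\pi$, with both kinds occurring. I treat the easier direction first.

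For the ``if'' direction, assume $G=AB$ as in the statement. Since $A$ is a normal Hall subgroup of $G$ and $B$ a Hall subgroup of coprime order, Schur--Zassenhaus gives $G=A\rtimes B$, and one checks (using $|Z|=|Z\cap A|\cdot|Z\cap B|$) that $BZ/Z$ is a genuine complement of $AZ/Z$ in $G/Z$, so $G/Z=(AZ/Z)\rtimes(BZ/Z)$ is the hypothesized Frobenius group. Now split the elements of $G$ into two families. First, if $x\in AZ$ then the $B$-part of $x$ is central, so $\cent Gx\supseteq A$; and if moreover $x\notin Z$ then $\cent{G/Z}{xZ}=AZ/Z$, because the Frobenius kernel $AZ/Z$ is abelian, whence $|x^G|=[G:AZ]=|BZ/Z|$. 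Thus every such class size equals $|BZ/Z|$, which is divisible by every prime of $\pi(BZ/Z)$; so $\pi(BZ/Z)\subseteq\V G$, and this single class makes $\pi(BZ/Z)$ a complete subgraph. Second, if the image of $x$ in $G/AZ\cong BZ/Z$ is non-trivial, then by the Frobenius structure $xZ$ is $G/Z$-conjugate into $BZ/Z$, so some $G$-conjugate of $x$ lies in $BZ$ and hence centralizes $B$; therefore $|x^G|$ divides $|A|$, is a $\pi(A)$-number, and its prime divisors that are vertices of $\Delta(G)$ all lie in $\pi(AZ/Z)$ (for the remaining primes of $\pi(A)$ the Sylow subgroup of $G$ is central). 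To see that $\pi(AZ/Z)$ is likewise complete and contained in $\V G$, take $x$ with $xZ$ a \emph{non-trivial} element of the complement: by the Frobenius action $\cent{AZ/Z}{xZ}=1$, so $|AZ/Z|$ divides $|(xZ)^{G/Z}|$, which divides $|x^G|$; hence $|x^G|$ is divisible by every prime of $\pi(AZ/Z)$. Assembling the two families, $\V G$ is the disjoint union of the non-empty sets $\pi(AZ/Z)$ and $\pi(BZ/Z)$, no class size is divisible by a prime from each, and both sets induce complete subgraphs; hence $\Delta(G)$ is disconnected with exactly the two stated components.

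For the ``only if'' direction, suppose $\Delta(G)$ is disconnected, let $\rho_1,\rho_2$ be the vertex sets of its two components, and put $\pi=\rho_1$, so every conjugacy class size of $G$ is a $\pi$-number or a $\pi'$-number, and both kinds occur. The core of the argument is then the structural statement that \emph{a group in which every class size is a $\pi$-number or a $\pi'$-number, with both occurring, is solvable and, after possibly interchanging $\pi$ and $\pi'$, has a normal abelian Hall $\pi$-subgroup $A$ with abelian Hall complement $B$ such that $G/\zent G$ is a Frobenius group with kernel $A\zent G/\zent G$.} To prove it, one first locates a suitable Hall subgroup and shows it is normal and abelian, using coprime action together with It\^o-type results on groups (and sections) admitting few distinct class sizes; one then examines the action of the Hall complement: if it were central, $G/\zent G$ would be abelian and $\Delta(G)$ empty, so disconnectedness forces the action to be fixed-point-free modulo the center, that is, the Frobenius condition. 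Finally one matches the prime sets: $\pi(A\zent G/\zent G)$ and $\pi(B\zent G/\zent G)$ are exactly the primes of $\pi$, respectively of $\pi'$, that remain vertices of $\Delta(G)$, and by the ``if'' direction already proved these two sets coincide with $\rho_1$ and $\rho_2$.

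The step I expect to be the main obstacle is obtaining, in the ``only if'' direction, the \emph{normality together with abelianness} of one of the two Hall subgroups and the \emph{fixed-point-freeness modulo the center} of the complementary action: this is where the real group-theoretic work lies, and it is precisely here that one uses that the two prime sets are mutually non-adjacent, rather than merely that the set of class sizes is small. By contrast the ``if'' direction amounts to bookkeeping with centralizers inside a Frobenius quotient, and once the structure is in hand the identification of $\pi(A\zent G/\zent G)$ and $\pi(B\zent G/\zent G)$ with $\rho_1$ and $\rho_2$ is routine.
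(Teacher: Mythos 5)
Note first that the paper does not prove this proposition at all: it is quoted as an external result, namely \cite[Theorem~4]{D0}, so there is no internal argument to match and any proof you give must stand on its own. Your ``if'' direction does stand on its own and is essentially complete: from $A\le\cent Gx$ and $Z\le\cent Gx$ you get $\cent Gx=AZ$ for every $x\in AZ\setminus Z$, hence class size exactly $|BZ/Z|$; the Frobenius partition lets you conjugate any $x$ with $xZ\notin AZ/Z$ into $BZ$, giving class sizes dividing $|A|$ and divisible by $|AZ/Z|$ when $xZ$ is a non-trivial element of a complement. This correctly produces the two complete components $\pi(AZ/Z)$ and $\pi(BZ/Z)$.

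The ``only if'' direction, however, is a genuine gap, and it is the entire substance of the cited theorem. What you call ``the core of the argument'' is just a restatement of the conclusion, and the one-sentence strategy offered for it does not carry any weight: (i) nothing in the proposal establishes solvability of $G$, nor the existence, normality and abelianness of a Hall subgroup for one of the two components --- ``coprime action together with It\^o-type results on groups admitting few distinct class sizes'' names a toolbox, not an argument, and this is precisely where the work in \cite{D0} lies; (ii) the claim that ``if the action were central then $\Delta(G)$ would be empty, so disconnectedness forces the action to be fixed-point-free modulo the center'' is a non sequitur: non-centrality of the action of $B$ on $A$ modulo $\zent G$ does not imply fixed-point-freeness. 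To rule out a fixed point one must argue, in the style of Lemma~\ref{product}(a), that a non-central $b\in B$ commuting with a non-central $a\in A$ would force a class size divisible by a prime attached to each component, and even that argument presupposes the structural facts (abelian Hall subgroups with the correct prime sets) that have not yet been obtained at that stage. As written, the hard implication is assumed rather than proved; you would need to reproduce the full proof of \cite[Theorem~4]{D0}, or simply cite it as the paper does.
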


The next lemma is well known and easy to prove. After that, we recall some statements that will come into play, dealing with non-complete vertices of \(\Delta(G)\).
 
\begin{lemma} \label{product}
  Let $G$ be a group and let $x,y \in G$ be such that one of the following holds.
  \begin{enumeratei}
  \item $x$ and $y$ have coprime orders and they commute.
  \item $x \in X $ and $y \in Y$, where $X$ and $Y$ are normal subgroups of $G$ such that $X \cap Y = 1$. 
\end{enumeratei}
Then \(\pi_G(x) \cup \pi_G(y)  \sbs \pi_G(xy). \)
\end{lemma}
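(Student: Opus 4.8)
The plan is to reduce both cases to the single identity
\[
\cent G{xy} = \cent Gx \cap \cent Gy,
\]
from which the statement follows at once. Indeed, since $\cent Gx \cap \cent Gy$ is contained in $\cent Gx$, the class size $|x^G| = |G : \cent Gx|$ divides $|G : \cent Gx\cap\cent Gy| = |G : \cent G{xy}| = |(xy)^G|$; symmetrically $|y^G|$ divides $|(xy)^G|$. Hence every prime in $\pi_G(x)\cup\pi_G(y)$ divides $|(xy)^G|$, i.e.\ lies in $\pi_G(xy)$, which is the desired conclusion.

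So the work is to prove the centralizer identity. The inclusion $\cent Gx\cap\cent Gy \sbs \cent G{xy}$ is trivial, and for the reverse one the crucial observation is that $x$ and $y$ can be recovered from the single element $xy$. In case~(a), $x$ and $y$ commute and have coprime orders $m$ and $n$; choosing, by the Chinese Remainder Theorem, an integer $e$ with $e\equiv 1\pmod m$ and $e\equiv 0 \pmod n$, one gets $(xy)^e = x^e y^e = x$, and symmetrically $y$ is a power of $xy$, so any element centralizing $xy$ centralizes both $x$ and $y$. In case~(b), from $[X,Y]\leq X\cap Y = 1$ one first deduces that $x$ and $y$ commute and that the subgroup $XY$ is the internal direct product of $X$ and $Y$; then, for $g\in\cent G{xy}$, conjugating the equality $xy=yx$ by $g$ and using that $X$ and $Y$ are normal gives $x^g\in X$, $y^g\in Y$ with $x^g y^g = xy$, and uniqueness of the decomposition in $X\times Y$ forces $x^g = x$ and $y^g = y$, that is, $g\in\cent Gx\cap\cent Gy$.

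I do not anticipate a genuine obstacle here: the one point requiring a little care is exactly this reconstruction of $x$ and $y$ from $xy$ --- handled via coprimality of orders in (a) and via the direct-product structure in (b) --- while the remainder is a routine computation with subgroup indices.
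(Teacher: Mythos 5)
Your proof is correct: the centralizer identity $\cent G{xy}=\cent Gx\cap\cent Gy$, obtained via the power argument in case (a) and the uniqueness of the $X\times Y$ decomposition in case (b), immediately gives the divisibility of $|(xy)^G|$ by $|x^G|$ and $|y^G|$. The paper omits the proof, calling the lemma well known and easy, and your argument is exactly the standard one it has in mind.
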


Given a prime \(p\), as customary, we say that a group is \(p\)-nilpotent if it has a normal Hall \(p'\)-subgroup.

\begin{proposition}
\label{nilpotency}
Let \(G\) be a group; then the following holds.
\begin{enumeratei}
\item Let \(p\), \(q\) be non-adjacent vertices of \(\Delta(G)\). Then \(G\) is either \(p\)-nilpotent or \(q\)-nilpotent, with both abelian Sylow \(p\)-subgroups and Sylow \(q\)-subgroups.
\item If $\pi$ is a set of vertices which are all  non-adjacent to a vertex $p$ in $\Delta(G)$,
  then $G$ is $\pi$-solvable with abelian Hall $\pi$-subgroups, and the vertices in \(\pi\) are pairwise adjacent. 
\end{enumeratei}
\end{proposition}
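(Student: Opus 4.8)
The plan is to extract both statements from a single, elementary principle — the ``commuting coprime elements'' inequality of Lemma~\ref{product} — supplemented by the structure of the disconnected case (Proposition~\ref{disconnected}) and, for the clique statement in~(b), by the fact recalled in the Introduction that \(\V G\) is always a union of two complete subgraphs (\cite[Corollary~B]{DPSS}). For part~(a), the decisive first step is to observe that \emph{we cannot simultaneously have a \(p\)-element whose class size is divisible by \(p\) and a \(q\)-element whose class size is divisible by \(q\)}. Indeed, if \(x\) is a \(p\)-element with \(p\in\pi_G(x)\) then \(q\notin\pi_G(x)\) because \(p,q\) are non-adjacent, so \(\cent Gx\) contains a full Sylow \(q\)-subgroup \(Q_1\) of \(G\); symmetrically, a \(q\)-element \(y\) with \(q\in\pi_G(y)\) is centralised by a full Sylow \(p\)-subgroup. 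Since \(y\) is a \(q\)-element it lies in some Sylow \(q\)-subgroup, which is conjugate to \(Q_1\); replacing \(y\) by the corresponding conjugate (which has the same class size) we may assume \(y\in Q_1\le\cent Gx\). Then \(x\) and \(y\) commute and have coprime orders, and \(p\in\pi_G(x)\), \(q\in\pi_G(y)\), so Lemma~\ref{product} gives \(\{p,q\}\sbs\pi_G(xy)\), i.e.\ \(pq\) divides \(|(xy)^G|\) — contradicting non-adjacency. Hence, interchanging \(p\) and \(q\) if necessary, we may assume that \emph{every \(p\)-element \(x\) satisfies \(p\notin\pi_G(x)\)}; this at once yields that a Sylow \(p\)-subgroup \(P\) is abelian, for otherwise some \(y\in P\) would have \(\cent Py<P\) and then \(p\) would divide \(|P:\cent Py|\), a divisor of \(|y^G|\).

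It remains, for~(a), to show that \(G\) is \(p\)-nilpotent — or, in the symmetric alternative, \(q\)-nilpotent — and that the other Sylow subgroup is abelian too. Here I would proceed by induction on \(|G|\), using that the class-size hypothesis is inherited by the quotients \(G/N\) with \(N\nor G\), and invoking a normal \(p\)-complement criterion such as Burnside's (available since \(P\) is abelian) or Frobenius's: an element of \(\norm GP\) that fails to centralise \(P\) has class size divisible by~\(p\), which brings the non-adjacency with \(q\) back into play, while Proposition~\ref{disconnected} disposes of the configurations in which \(\Delta(G)\) is disconnected. \textbf{This last step is the real obstacle}: the one-sided property obtained above does not by itself single out the prime relative to which \(G\) is nilpotent — in \(\alt4\) with \(\{p,q\}=\{2,3\}\), every \(r\)-element has \(r\)-coprime class size for both \(r\in\{2,3\}\), yet \(\alt4\) is nilpotent only with respect to~\(3\) — so the argument must genuinely branch on the two alternatives and keep recycling the full strength of the non-adjacency through Lemma~\ref{product}, not merely its one-sided consequence.

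For part~(b), the fact that \(\pi\) is a clique is immediate: by \cite[Corollary~B]{DPSS} the graph \(\Delta(G)\) has no three pairwise non-adjacent vertices, whereas two non-adjacent vertices of \(\pi\) together with \(p\) would be such a triple (and we may assume \(p\notin\pi\), else \(\pi=\{p\}\) and there is nothing to prove). Applying part~(a) to each pair \(\{p,q\}\) with \(q\in\pi\), we obtain that every Sylow \(q\)-subgroup of \(G\) is abelian and that \(G\) is \(p\)-nilpotent or \(q\)-nilpotent. If \(G\) is \emph{not} \(p\)-nilpotent, then it is \(q\)-nilpotent for every \(q\in\pi\), so \(N:=\bigcap_{q\in\pi}\oh{q'}G\) is a normal \(\pi'\)-subgroup and \(G/N\) embeds into \(\prod_{q\in\pi}G/\oh{q'}G\), each factor being (isomorphic to) an abelian Sylow \(q\)-subgroup of \(G\); thus \(G/N\) is an abelian \(\pi\)-group, \(N\) is a Hall \(\pi'\)-subgroup, and by Schur--Zassenhaus \(G=N\rtimes H\) with \(H\) an abelian Hall \(\pi\)-subgroup, so \(G\) is \(\pi\)-solvable. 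If \(G\) \emph{is} \(p\)-nilpotent, write \(G=K\rtimes P\) with \(K=\oh{p'}G\): then \(|G|_\pi=|K|_\pi\) (so a Hall \(\pi\)-subgroup of \(K\) is one of \(G\)), and adjacency in \(\Delta(K)\) forces adjacency in \(\Delta(G)\); one concludes by applying the inductive hypothesis on \(|G|\) to \(K\) with the set \(\pi\cap\V K\), the primes of \(\pi\) that become central in \(K\) splitting off as central abelian direct factors and contributing their abelian Sylow subgroups to the Hall \(\pi\)-subgroup. In the smallest cases — for instance as soon as \(\V G=\pi\cup\{p\}\), where \(\Delta(G)\) is disconnected — Proposition~\ref{disconnected} applies directly and already exhibits \(G\) as solvable with an abelian Hall \(\pi\)-subgroup.
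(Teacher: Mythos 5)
There is a genuine gap, and it is worth noting first that the paper itself does not prove this proposition: it quotes part (a) from \cite[Lemma~5 and Theorem~B]{CD2} and part (b) from \cite[Theorem~C]{CDPS13}, both of which are substantial theorems in their own right. For part (a), your opening move is correct and is indeed the standard easy observation: if some \(p\)-element had class size divisible by \(p\) and some \(q\)-element had class size divisible by \(q\), then non-adjacency puts a conjugate of the second inside the centralizer of the first, and Lemma~\ref{product}(a) gives the forbidden edge \(\{p,q\}\); hence (say) every \(p\)-element has class size coprime to \(p\), and the Sylow \(p\)-subgroup is abelian. But everything after that is missing: the passage from this one-sided information to ``\(G\) is \(p\)-nilpotent or \(q\)-nilpotent, with \emph{both} Sylow subgroups abelian'' is precisely the content of Theorem~B of \cite{CD2}, and it is not recovered by ``Burnside or Frobenius plus induction on \(|G|\)'' — the reductions modulo normal subgroups leave non-abelian chief factors (products of simple groups) in the way, and ruling these out requires classification-dependent information about class sizes of simple groups, which is why the cited proof is long. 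You explicitly label this step ``the real obstacle'' and do not supply an argument, so part (a) is not proved.

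For part (b), the clique statement via Theorem~\ref{ultimo} is fine (and matches the remark the paper makes right after the proposition), and your branch in which \(G\) is \(q\)-nilpotent for every \(q\in\pi\) is correct and even yields a normal abelian Hall \(\pi\)-subgroup quotient, hence \(\pi\)-solvability. The gap is in the \(p\)-nilpotent branch: you propose to apply the inductive hypothesis to \(K=\oh{p'}G\) with the set \(\pi\cap\V K\), but \(p\nmid |K|\), so \(p\) is not a vertex of \(\Delta(K)\) and the hypothesis ``\(\pi\) consists of vertices non-adjacent to \(p\)'' simply evaporates; no substitute vertex is identified. What does descend to \(K\) — abelian Sylow \(q\)-subgroups for all \(q\in\pi\) — is nowhere near enough: a non-abelian simple group with abelian Sylow subgroups for two primes of \(\pi\) (e.g.\ \({\rm PSL}(2,7)\) with \(\pi=\{3,7\}\)) has neither \(\pi\)-solvability nor a Hall \(\pi\)-subgroup, so the induction cannot close on this information alone. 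In other words, both halves of your argument bottom out exactly where the deep cited theorems \cite[Theorem~B]{CD2} and \cite[Theorem~C]{CDPS13} begin; as a self-contained proof the proposal is incomplete, though its elementary steps (the dichotomy in (a), the clique argument and the ``\(q\)-nilpotent for all \(q\in\pi\)'' case in (b)) are sound.
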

\begin{proof}
Part (a) comes from~ \cite[Lemma~5 and Theorem~B]{CD2} and part (b) from~\cite[Theorem~C]{CDPS13}.
  \end{proof}
  
We remark that the last conclusion in part (b) of Proposition~\ref{nilpotency} follows from a much more general fact, that will be crucial in our discussion, and that was already mentioned in the Introduction. This is Corollary~B in \cite{DPSS}:

\begin{theorem}
\label{ultimo}
Let \(G\) be a group. Then the vertex set of \(\Delta(G)\) can be partitioned into two subsets, each inducing a complete subgraph of \(\Delta(G)\).
\end{theorem}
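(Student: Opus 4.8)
The plan is to recast the statement graph-theoretically and then prove it by an explicit two-colouring. Writing $\overline{\Delta(G)}$ for the complement of $\Delta(G)$, the claim that $\V G$ is a union of two cliques of $\Delta(G)$ is equivalent to $\overline{\Delta(G)}$ being bipartite, i.e.\ to $\overline{\Delta(G)}$ having no odd cycle. A first, easy observation is that $\overline{\Delta(G)}$ is triangle-free: by Proposition~\ref{nilpotency}(b) the set of vertices non-adjacent to a fixed vertex $p$ induces a complete subgraph of $\Delta(G)$, so $\Delta(G)$ has no independent set of size three.

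Rather than inducting on cycle length, I would get bipartiteness at one stroke by colouring a vertex $p\in\V G$ \emph{black} if $G$ is $p$-nilpotent and \emph{white} otherwise, and checking that no edge of $\overline{\Delta(G)}$ is monochromatic. A white--white edge is excluded immediately by Proposition~\ref{nilpotency}(a), which says precisely that two non-adjacent vertices cannot both fail to be nilpotence primes. The whole difficulty is therefore concentrated in the complementary ``Key Lemma'': \emph{if $p,q\in\V G$ are non-adjacent in $\Delta(G)$, then $G$ is not simultaneously $p$-nilpotent and $q$-nilpotent.} Granting it, neither colour class contains an edge of $\overline{\Delta(G)}$, hence both induce cliques of $\Delta(G)$, and we are done.

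To prove the Key Lemma I would argue by contradiction, so suppose $G$ is $p$-nilpotent and $q$-nilpotent with $p,q\in\V G$; one may also take $G$ of least order with this property. A short reduction fixes the shape of $G$: since $\oh{p'}G$ is $q$-nilpotent, the subgroup $H:=\oh{(pq)'}G$ is a normal Hall $\{p,q\}'$-subgroup with $G/H\cong P\times Q$ abelian (here $P\in\syl pG$, $Q\in\syl qG$, both abelian by Proposition~\ref{nilpotency}(a)), so that $G=H\rtimes K$ with $K=P\times Q$; moreover $p,q\in\V G$ forces $P$ and $Q$ to act non-trivially (by conjugation) on $H$. It then suffices to exhibit $z\in H$ with $pq\mid|z^G|$, contradicting the non-adjacency of $p$ and $q$. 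A direct computation shows that the $\{p,q\}$-part of $|z^G|$ equals $|K:S_z|$, where $S_z=\{k\in K:z^k\in z^H\}$; hence $p\mid|z^G|$ (resp.\ $q\mid|z^G|$) exactly when the $H$-class $z^H$ is not $P$-invariant (resp.\ not $Q$-invariant), and $z^H$ is $P$-invariant iff it meets $\cent HP$ (a coprime $p$-group acting on the $p'$-set $z^H$ has a fixed point), and similarly for $Q$. Consequently a suitable $z$ exists if and only if
\[
\Bigl(\bigcup_{h\in H}\cent HP^{\,h}\Bigr)\ \cup\ \Bigl(\bigcup_{h\in H}\cent HQ^{\,h}\Bigr)\ \neq\ H .
\]
When $H$ is abelian this is clear: the left-hand side is then $\cent HP\cup\cent HQ$, a union of two proper subgroups (proper since $P,Q$ act non-trivially), and a group is never the union of two proper subgroups.

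The remaining case, $H$ non-abelian, is where I expect the real work to lie. Each of the two sets above is a proper subset of $H$ — indeed a coprime automorphism group stabilising every conjugacy class is trivial, which follows from the classical fact that a finite group is not covered by the conjugates of a proper subgroup — but deducing that their union is still proper genuinely uses that the actions of $P$ and $Q$ on $H$ are coprime and commute. I would handle this by minimality: passing to $G/L$ for a minimal normal subgroup $L\le H$ of $G$, the inductive hypothesis applied to $G/L$ (again $p$- and $q$-nilpotent, with $p,q$ still non-adjacent in $\Delta(G/L)$) would force $p$ or $q$ out of $\V{G/L}$, i.e.\ $P$ or $Q$ to act trivially on $H/L$; combined with coprimality and the structure of $L$ as a characteristically simple group, this should reduce matters to the essentially elementary-abelian situation already settled. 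Turning this outline into a clean argument — in particular controlling the subcase in which $H$ is itself a non-abelian characteristically simple minimal normal subgroup — is the step I anticipate to be the main obstacle.
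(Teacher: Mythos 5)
Your reduction to the ``Key Lemma'' is where the proposal breaks down: that lemma is false, so the black/white colouring by $p$-nilpotency cannot produce the two cliques. A concrete counterexample is the affine semilinear group $G=F^{+}\rtimes\bigl(F^{\times}\rtimes\langle\phi\rangle\bigr)$ with $F=\GF{3^5}$ and $\phi\colon x\mapsto x^{3}$, of order $3^{5}\cdot 2\cdot 11^{2}\cdot 5$. The Sylow $2$-subgroup $\{\pm1\}\le F^{\times}$ and the Sylow $5$-subgroup $\langle\phi\rangle$ are non-central, so $2,5\in\V G$; moreover $G$ is $2$-nilpotent (the Hall $2'$-subgroup $F^{+}\rtimes(C\rtimes\langle\phi\rangle)$, where $C$ is the subgroup of order $11^{2}$ of $F^{\times}$, has index $2$) and $5$-nilpotent (the kernel of $G\to{\rm Gal}(F/\GF{3})$ is a normal $5$-complement). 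Nevertheless $2$ and $5$ are non-adjacent in $\Delta(G)$: if $o(g)$ is divisible by $2$ (resp.\ $5$), then the $2$-part (resp.\ $5$-part) of $g$ lies in $\cent G{g}$ and generates a full Sylow subgroup, these Sylow subgroups having prime order, so $2\nmid|g^G|$ (resp.\ $5\nmid|g^G|$); and a $\{2,5\}'$-element is ($G$ being solvable) conjugate into the Hall $\{2,5\}'$-subgroup $H=F^{+}\rtimes C$, where every non-trivial element of $F^{+}$ is $F^{\times}$-conjugate to the translation by $1$, which is centralized by $\langle\phi\rangle$ (class size coprime to $5$), while every element of $H\setminus F^{+}$ is, $H$ being a Frobenius group with kernel $F^{+}$, conjugate into $C$, which is centralized by $-1$ (class size coprime to $2$). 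So no class size of $G$ is divisible by $10$, yet both $2$ and $5$ are ``black'' in your colouring: the black class is not a clique. (This $G$ is exactly a group of type {\bf(I)} of Theorem~B, with $r=3$; the correct two-clique cover is $\{2,3,11\}\cup\{5\}$, which is not visible from $p$-nilpotency.)

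Consequently the case you yourself flagged as the main obstacle ($H$ non-abelian) is not a gap to be filled but the precise point where the statement fails, and the minimal-counterexample reduction has nothing to induct towards -- even though several ingredients are correct (triangle-freeness of the complement via Proposition~\ref{nilpotency}(b), the white--white exclusion via Proposition~\ref{nilpotency}(a), the formula for the $\{p,q\}$-part of $|z^G|$, and the abelian-$H$ case). Note also that the paper does not prove Theorem~\ref{ultimo} at all: it quotes it from \cite[Corollary~B]{DPSS}, whose proof is a substantial argument built on the subgroups $K_p(G)$ of Lemma~\ref{vertex} and on Proposition~\ref{gamma}; any colouring witnessing the theorem has to be of that finer kind, not the $p$-nilpotent/non-$p$-nilpotent dichotomy.
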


\begin{lemma}\label{three}
  Let $p,r,q$ be three distinct primes and let $G = PRQ$, where $P \in \syl pG$, $R \in \syl rG$,
  $Q \in \syl qG$, $RQ \leq G$, and both $P$ and $PR$ are normal subgroups of $G$. If $\{ p, q\}$ is not an edge of
  $\Delta(G)$, then $R$ centralizes either $P$ or $Q$.  
  \end{lemma}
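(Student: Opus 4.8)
The plan is to argue by contradiction, after clearing away several degenerate configurations. First I would reduce to the case where $p$, $q$ and $r$ are all vertices of $\V G$: if $r\notin\V G$ then $R$ is a central Sylow subgroup of $G$ and centralizes $P$; if $p\notin\V G$ (respectively $q\notin\V G$) then $P$ (respectively $Q$) is central and is of course centralized by $R$. So assume $p,q,r\in\V G$. Next, from the series $1\trianglelefteq P\trianglelefteq PR\trianglelefteq G$ with factors of prime-power order, $G$ is solvable; moreover $PR$ is a normal $q$-complement of $G$ (it is normal, of $q'$-order and of index a power of $q$), so by Schur--Zassenhaus we may write $G=PR\rtimes Q$. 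By Proposition~\ref{nilpotency}(a) applied to the non-adjacent vertices $p,q$, the Sylow subgroups $P$ and $Q$ are abelian and $G$ is $p$-nilpotent or $q$-nilpotent. If $G$ is $p$-nilpotent, its normal $p$-complement is a Hall $\{r,q\}$-subgroup, hence — by solvability of $G$ and conjugacy of Hall subgroups — it coincides with $RQ$, so $RQ\trianglelefteq G$; since also $P\trianglelefteq G$, $P\cap RQ=1$ and $|P|\,|RQ|=|G|$, we get $G=P\times RQ$, and $R$ centralizes $P$. From now on I may therefore assume that $G$ is not $p$-nilpotent.

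Now suppose, for a contradiction, that $R$ centralizes neither $P$ nor $Q$, i.e. $[P,R]\neq 1$ and $[R,Q]\neq 1$; the goal is to exhibit $g\in G$ with $pq\mid |g^G|$, contradicting that $\{p,q\}$ is not an edge of $\Delta(G)$. Since $[P,R]\neq 1$ we may pick $x\in R$ with $\cent P x<P$; because $P$ is a normal Sylow $p$-subgroup of $G$ we get $p\in\pi_G(x)$, hence $q\notin\pi_G(x)$, so some Sylow $q$-subgroup $Q_0$ of $G$ lies in $H:=\cent G x$. Note that $P_1:=\cent P x=P\cap H$ is normalized by $H$ (which normalizes $P$ and centralizes $x$) and that $P_1<P$. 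If $[P_1,Q_0]\neq 1$, then — using that $P_1\trianglelefteq H$, that the Sylow $q$-subgroups of $H$ are $H$-conjugate and are Sylow $q$-subgroups of $G$ (as $Q_0\le H$), and that a finite group is never the union of the conjugates of a proper subgroup — there is $u\in P_1$ with $q\mid |u^H|$. Then $g:=ux$ works: $u$ and $x$ commute and have coprime orders, so $\cent G g=\cent G u\cap\cent G x=\cent H u$, whence $q\mid[H:\cent H u]$ divides $|g^G|$; and $\cent P g=\cent P x=P_1<P$ (since $u\in P$ and $P$ is abelian), so $p\mid |g^G|$ — a contradiction.

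It remains to handle the case $[P_1,Q_0]=1$, i.e. where $Q_0$ centralizes $\cent P x$. Here I would first record that $\cent P R$ is normal in $G$: it is normalized by $P$, by $R$, and by $Q$ — the latter because $Q$ normalizes $R$, since $R\trianglelefteq RQ$ (indeed $R=PR\cap RQ$ and $PR\trianglelefteq G$). One may then pass to the quotient $G/\cent P R$ and assume $\cent P R=1$, equivalently $[P,R]=P$; this is legitimate because $\Delta(G/\cent P R)$ is a subgraph of $\Delta(G)$, so producing the required element downstairs suffices, and all the hypotheses (including $[P,R]\neq 1$ and $[R,Q]\neq 1$) persist. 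With this reduction in hand, and using $[R,Q]\neq 1$ together with the same covering argument now applied to the action of a Sylow $q$-subgroup on $PR=P\rtimes R$ (so that $\cent{PR}Q<PR$ and hence $PR$ is not the union of the $G$-conjugates of $\cent{PR}Q$), I would extract a suitable ``$q$-witness'' element and combine it with a ``$p$-witness'' coming from $[P,R]=P$, arranged to have coprime orders and to commute, producing $g$ with $pq\mid |g^G|$.

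The step I expect to be the main obstacle is precisely this last combining step in the case $[P_1,Q_0]=1$: the natural element of $P$ that ``sees $q$'' need not commute with an $r$-element that ``sees $p$'', and a priori the centralizer of a product of two elements of $PR$ can be considerably larger than the intersection of their centralizers, so that controlling $\cent G g$ — and thereby guaranteeing that both $p$ and $q$ divide $|g^G|$ — requires choosing the two factors with care (of coprime orders, lying in a common abelian subgroup of $P\rtimes R$, and exploiting the reduction $\cent P R=1$). This is the delicate point of the argument.
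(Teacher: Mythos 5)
The early reductions in your argument are fine (solvability of $G$, the fact that $R=PR\cap RQ\nor RQ$, abelianness of $P$ and $Q$, and the disposal of the $p$-nilpotent case), but the core step has a genuine gap. In the case $[P_1,Q_0]\neq 1$, the existence of $u\in P_1$ with $q\mid |u^H|$ does not follow from what you cite: the classical fact that a finite group is never the union of the conjugates of a \emph{proper subgroup of itself} does not apply to covering the normal subgroup $P_1$ by the subgroups $\cent{P_1}{Q_0}^h$ with $h$ ranging over all of $H$, and such a covering is perfectly possible. In fact the statement you rely on is false at the level of generality at which you use it (``$P_1$ abelian normal in $H$, $Q_0\in\syl qH$, $[P_1,Q_0]\neq 1$''): take $H=V\rtimes L$, where $V$ is elementary abelian of order $8$ and $L\cong C_7\rtimes C_3$ is the Frobenius group of order $21$ inside ${\rm GL}(3,2)$. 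Then $V$ is an abelian normal (indeed Sylow) $2$-subgroup of $H$, a Sylow $3$-subgroup $Q_0$ of $H$ acts non-trivially on $V$, and yet every element of $V$ has $H$-class size $1$ or $7$, never divisible by $3$: each non-zero vector is centralized by a conjugate of $Q_0$, since its stabilizer in $L$ has order $3$. So producing the required $u$ would have to use the ambient data (the element $x$, the subgroup $R$, and the non-adjacency of $p$ and $q$ in the whole graph $\Delta(G)$), which this step of your argument does not do.

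The other half of the argument is simply not there: in the case $[P_1,Q_0]=1$ you carry out the (legitimate) reduction to $\cent PR=1$, but then only describe the intention of combining a ``$q$-witness'' with a ``$p$-witness'', and you yourself acknowledge that you cannot control $\cent Gg$ for the product; so the case carrying the real difficulty is left unproved. For comparison, the paper does not attempt a self-contained construction of an element of class size divisible by $pq$: after observing $R=PR\cap RQ\nor RQ$ and discarding the degenerate cases where $p$ or $q$ is not a vertex, it invokes Theorem~24 of \cite{BDIP}, which yields the dichotomy that either $R\nor G$ or $PQ\nor G$; in the first case $[R,P]\leq R\cap P=1$, and in the second $Q=PQ\cap RQ\nor RQ$, whence $[R,Q]=1$. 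The dichotomy you are trying to re-derive by hand is precisely the content of that cited theorem, and your attempt does not recover it.
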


\begin{proof}
  Note that, as \(PR\nor G\), we have \(R=PR\cap RQ\nor RQ\). Also, we can assume that both $p$ and $q$ are vertices of $\Delta(G)$, as otherwise either $P$ or $Q$ are
  central in $G$. Now, Theorem 24 of~\cite{BDIP} yields that either $R \nor G$, and hence $[R, P] = 1$,
  or $PQ \nor G$. In the latter case, as above, we have \(Q=PQ\cap RQ\nor RQ\); therefore both $R$ and $Q$ are normal subgroups of $RQ$, and $[R,Q] = 1$.
  \end{proof}

The following lemma introduces an important characteristic subgroup of \(G\), that we denote by  \(K_p(G)\), associated to a non-complete vertex \(p\) of $\Delta(G)$. Before stating it, we introduce some more notation.

\begin{defn}
For a group \(G\), we denote by $\nu(G)$ the set of the primes $t\in\pi(G)$ such that
$G$ has a \emph{normal} Sylow $t$-subgroup. 
\end{defn}
\begin{lemma}{\cite[Lemma~2.3]{DPSS}.} \label{vertex}
  Let $G$ be a group,  let $p$ be a non-complete vertex of $\Delta(G)$ and
  $P$ a Sylow $p$-subgroup of $G$.
  Then $G$ is $p$-solvable, $P$ is abelian,  and $[G, P]$ has a normal
  $p$-complement $K_p(G)$.
 Furthermore,  $[K_p(G), P] = K_p(G)$ and, if $ p \not\in \nu(G)$, then there are elements $x$ in $K_p(G)$ such that
 $p \in \pi_G(x)$. 
\end{lemma}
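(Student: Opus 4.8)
The plan is to derive everything from the \emph{normal closure} $M:=\gen{P^G}$ of $P$ in $G$. This $M$ is characteristic in $G$ (automorphisms permute the Sylow $p$-subgroups of $G$), and it satisfies $M=P[G,P]$ with $[G,P]\nor M$ and $M/[G,P]$ a $p$-group. First, pick a vertex $q$ of $\Delta(G)$ not adjacent to $p$; by Proposition~\ref{nilpotency} the Sylow $p$-subgroup $P$ is abelian and $G$ is $p$-solvable (this is immediate when $G$ is $p$-nilpotent, and otherwise one reduces to a normal $q$-complement).

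The crucial first step is that $M$ has a normal $p$-complement. Set $\o{G}:=G/\oh{p'}{G}$; then $\oh{p'}{\o{G}}=1$, so $p$-solvability gives $\cent{\o{G}}{\oh{p}{\o{G}}}\le\oh{p}{\o{G}}$, and since the image $\o{P}$ of $P$ is an abelian Sylow $p$-subgroup of $\o{G}$ containing $\oh{p}{\o{G}}$, we get $\o{P}=\oh{p}{\o{G}}\nor\o{G}$. Consequently the image of $M$ in $\o{G}$ is a $p$-group, so $M\cap\oh{p'}{G}=\oh{p'}{M}$ is a normal $p$-complement of $M$, and I would set $K_p(G):=\oh{p'}{M}$. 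It is characteristic in $G$ because $M$ is; and since $[G,P]\nor M$ with $M/[G,P]$ a $p$-group, one checks without difficulty that $K_p(G)\le[G,P]$, that $K_p(G)$ is precisely the normal $p$-complement of $[G,P]$, and hence that $M=K_p(G)P$ with $K_p(G)\cap P=1$.

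To obtain $[K_p(G),P]=K_p(G)$, I would argue as follows. As $M/K_p(G)$ is a $p$-group, every Sylow $p$-subgroup of $M$ is $K_p(G)$-conjugate to $P$, and these subgroups generate $M$. For $k\in K_p(G)$ one has $P^k\subseteq P[P,K_p(G)]=P[K_p(G),P]$, and $P[K_p(G),P]$ is a subgroup of $M$ since $[K_p(G),P]\nor M$ by coprime action; hence $M=P[K_p(G),P]$. Comparing with the semidirect decomposition $M=K_p(G)\rtimes P$ and counting orders yields $|[K_p(G),P]|=|K_p(G)|$, i.e.\ $[K_p(G),P]=K_p(G)$. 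Equivalently, by coprime action $P$ acts \emph{fixed-point-freely} on $K_p(G)$, that is, $\cent{K_p(G)}{P}=1$.

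Finally, assume $p\notin\nu(G)$. Then $K_p(G)\neq1$: otherwise $[G,P]$, and hence $M=P[G,P]$, would be a $p$-group, forcing $M=P$ and thus $P\nor G$, against $p\notin\nu(G)$. Now suppose for contradiction that $p\notin\pi_G(x)$ for every $x\in K_p(G)$; this says $\cent{G}{x}$ contains a full Sylow $p$-subgroup of $G$, so for each such $x$ there is $g\in G$ with $P\le\cent{G}{x^{g^{-1}}}$. As $K_p(G)$ is characteristic, $x^{g^{-1}}\in\cent{K_p(G)}{P}=1$, whence $x=1$, so $K_p(G)=1$ --- a contradiction. I expect this last step to be the real point of the proof: the hypothesis only provides, for each $x$ individually, \emph{some} conjugate of $P$ centralizing $x$, and turning this into a statement about the single fixed subgroup $P$ is exactly where the characteristicity of $K_p(G)$ must be combined with the fixed-point-free action established above. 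Everything else --- the reduction modulo $\oh{p'}{G}$, the coprime-action facts, and the elementary properties of $\gen{P^G}$ --- is routine.
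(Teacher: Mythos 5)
The decisive problem is the sentence ``Equivalently, by coprime action $P$ acts \emph{fixed-point-freely} on $K_p(G)$''. For a coprime action one only has $K=[K,P]\,\cent KP$, and $[K,P]=K$ forces $\cent KP=1$ only when $K$ is abelian (where $K=[K,P]\times\cent KP$); for non-abelian $K$ the implication fails, and $K_p(G)$ can indeed be non-abelian in the situation of the lemma. Concretely, take $G=A\Gamma L(1,8)=2^3\rtimes(7\rtimes 3)$ of order $168$ and $p=3$: the class sizes are $1,7,24,28$, so $3$ is a non-complete vertex (it is non-adjacent to $7$) and $3\notin\nu(G)$; here $[G,P]=K_3(G)=2^3\rtimes 7$ is non-abelian, $[K_3(G),P]=K_3(G)$, yet $\cent{K_3(G)}{P}$ has order $2$. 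Hence your concluding argument, which needs $\cent{K_p(G)}{P}=1$ to turn ``each $x\in K_p(G)$ is centralized by some conjugate of $P$'' into $x=1$, collapses: the negation of the desired conclusion only says that $K_p(G)=\bigcup_{g\in G}\cent{K_p(G)}{P}^{\,g}$, and excluding this covering requires a genuinely different argument (a count of the conjugates of $\cent{K_p(G)}{P}$, or passage to a suitable abelian section of $K_p(G)$ on which fixed-point-freeness can really be established). In the example the conclusion of the lemma does hold --- the elements of order $7$ of $K_3(G)$ have class size $24$ --- but not for the reason you give; so the step you yourself flagged as ``the real point'' is exactly the one that is missing. (Note the present paper does not reprove this lemma; it quotes it from \cite{DPSS}, so this is where your argument would have to supply the substance.)

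A secondary gap is the claim of $p$-solvability. Proposition~\ref{nilpotency}(a) gives that $P$ and $Q$ are abelian and that $G$ is $p$-nilpotent or $q$-nilpotent, but your parenthetical ``otherwise one reduces to a normal $q$-complement'' does not yield $p$-solvability: inside the normal $q$-complement $H$ the prime $q$ no longer divides $|H|$, so the non-adjacency hypothesis carries no information about $H$, and an abelian Sylow $p$-subgroup alone does not force $p$-solvability (consider $A_5$ with $p=5$). The $p$-solvability of $G$ is a deeper input coming from \cite{CD2} and \cite{CDPS13} (the same machinery behind Proposition~\ref{gamma}) and must be quoted, not derived by this reduction. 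Granting $p$-solvability and the abelianness of $P$, the middle portion of your proposal is correct: $\gen{P^G}=P[G,P]$ has $p$-length one, $K_p(G)=\oh{p'}{\gen{P^G}}$ is characteristic, it is the normal $p$-complement of $[G,P]$, and your counting argument for $[K_p(G),P]=K_p(G)$ is sound.
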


We  note that, using the bar convention in a factor group $\overline {G} = G/N$ (for $N\nor G$),
we have $\overline{[G,P]} = [\overline{G}, \overline{P}]$, so the image of $K_p(G)$ along the canonical projection is the normal $p$-complement of  $[\overline{G}, \overline{P}]$. In particular, if \(p\) is a non-complete vertex also for \(\Delta(\o G)\), then $\overline{K_p(G)} = K_p(\overline{G})$ holds. 
We also observe that $p \in \nu(G)$ if and only if $K_p(G) = 1$.

Further, we need a  basic result related to the existence of regular orbits in coprime actions
of abelian groups.

\begin{lemma}{\cite[Lemma~2.4]{DPSS}.}\label{action}
  Let $G$ be a group such that $G/\fit G$ is abelian. Then there exists an element $g \in G$ such that the set of all prime divisors of $|G/\fit G|$ is contained in $\pi_G(g)$.
\end{lemma}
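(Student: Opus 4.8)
The plan is to replace $G$ by $G/\frat G$ — which affects neither the hypothesis nor the conclusion — and then to produce the required element inside the (now abelian) Fitting subgroup. For the reduction, recall that $\fit{G/\frat G}=\fit G/\frat G$, so that $(G/\frat G)/\fit{G/\frat G}\cong G/\fit G$ is abelian with the same set of prime divisors; and if $\o g\in G/\frat G$ realises the conclusion for $G/\frat G$ and $g\in G$ is a preimage, then the image of $\cent G g$ in $G/\frat G$ lies in $\cent{G/\frat G}{\o g}$, so that $\pi_{G/\frat G}(\o g)\sbs\pi_G(g)$, and therefore $\pi(G/\fit G)=\pi\bigl((G/\frat G)/\fit{G/\frat G}\bigr)\sbs\pi_G(g)$. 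Hence we may assume $\frat G=1$. Then $\frat{\fit G}\le\frat G=1$, so $\fit G$ is abelian; since $G/\fit G$ is abelian as well, $\cent G{\fit G}$ — which contains $\fit G$ in its centre and has abelian image in $G/\fit G$ — is nilpotent, hence is a normal nilpotent subgroup of $G$, and consequently $\cent G{\fit G}=\fit G$.

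Now put $F=\fit G$. Since $F$ is an abelian normal subgroup with $F\cap\frat G=1$, a theorem of Gasch\"utz provides a complement, say $G=F\rtimes K$ with $K\cong G/F$ abelian, and also shows that $F$ is a completely reducible $G$-module; thus $F=V_1\oplus\cdots\oplus V_k$ where each $V_i$ is a minimal normal subgroup of $G$. Because $\cent G F=F$, the group $K$ acts faithfully on $F$. Each $V_i$, being centralised by $F$, carries an irreducible action of $K$ (the $G$-action factoring through $G/F\cong K$), so by Schur's lemma the abelian group $K/\cent K{V_i}$ acts semiregularly on $V_i\setminus\{0\}$. Hence, for every $0\ne v\in V_i$, the stabiliser of $v$ in $K$ is exactly $\cent K{V_i}$, and since $F\le\cent G v$ we get $\cent G v=F\cdot\cent K{V_i}$ and therefore $|v^G|=|K:\cent K{V_i}|$.

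Finally, faithfulness of the action of $K$ gives $\bigcap_i\cent K{V_i}=1$, so for every prime $p$ dividing $|K|=|G/\fit G|$ we may fix an index $i(p)$ with $p\mid|K:\cent K{V_{i(p)}}|$. Let $I=\{i(p):p\mid|K|\}$, choose $0\ne v_i\in V_i$ for each $i\in I$, and set $g=\sum_{i\in I}v_i\in F$. Since the $V_i$ are independent direct summands and $F$ is abelian, an element $k\in K$ fixes $g$ precisely when $v_i^{\,k}=v_i$ for all $i\in I$, that is (by semiregularity) precisely when $k\in\bigcap_{i\in I}\cent K{V_i}$; consequently $\cent G g=F\cdot\bigcap_{i\in I}\cent K{V_i}$ and $|g^G|=|K:\bigcap_{i\in I}\cent K{V_i}|$. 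For each prime $p\mid|K|$, the integer $|K:\cent K{V_{i(p)}}|$ divides $|g^G|$ and is a multiple of $p$; hence $p\in\pi_G(g)$, and so $\pi(G/\fit G)\sbs\pi_G(g)$, as wanted.

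The substantive content is the structural picture of the second paragraph: once $\frat G=1$, the Fitting subgroup is abelian, self-centralising, complemented, and splits as a direct sum of minimal normal subgroups on each of which the abelian complement acts as a group of scalars — this is exactly what makes the class sizes $|v^G|$ transparent and lets us realise the prescribed set of primes $\pi(G/\fit G)$ inside a single class size. The remaining ingredients are routine: the reduction to $\frat G=1$ rests on the standard identities $\fit{G/\frat G}=\fit G/\frat G$ and $\frat{\fit G}\le\frat G$, and the splitting and complete reducibility of $F$ are Gasch\"utz's theorem; no result from the rest of the paper is used. (Should one wish to avoid Gasch\"utz's complement theorem, essentially the same argument runs with $\fit G/\frat{\fit G}$ in place of $\fit G$, after checking that this module is self-centralising in $G/\frat{\fit G}$.)
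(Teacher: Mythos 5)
Your argument is correct, and there is in fact no internal proof to compare it with: the paper imports this statement wholesale from \cite[Lemma~2.4]{DPSS}, introducing it only as ``a basic result related to the existence of regular orbits in coprime actions of abelian groups''. Your route is self-contained and sound: the reduction modulo $\frat G$ is legitimate (via $\fit{G/\frat G}=\fit G/\frat G$ and $\pi_{G/N}(gN)\subseteq\pi_G(g)$, the latter being exactly the elementary remark recalled in Section~2); once $\frat G=1$, the Fitting subgroup is indeed abelian and self-centralizing, Gasch\"utz gives the splitting $G=\fit G\rtimes K$ together with the decomposition of $\fit G$ into minimal normal subgroups $V_i$, and Schur's lemma gives semiregularity of $K/\cent K{V_i}$ on $V_i\setminus\{0\}$, so your computation $|g^G|=|K:\bigcap_{i\in I}\cent K{V_i}|$ is right. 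The one implicit point worth flagging is the selection of $i(p)$: passing from $\bigcap_i\cent K{V_i}=1$ to the existence, for each prime $p\mid|K|$, of some $i$ with $p\mid|K:\cent K{V_i}|$ uses that $K$ is abelian (a subgroup of $p'$-index contains the full Sylow $p$-subgroup), which is available here. By contrast, the proof suggested by the cited source and by the paper's framing would invoke a regular-orbit theorem for abelian groups acting faithfully on a completely reducible module, producing $v\in\fit G$ with $\cent Kv=1$ and hence a single class of size divisible by all of $|G/\fit G|$ --- a formally stronger conclusion. Your construction of $g$ as a product of one well-chosen component per prime trades that machinery for an elementary Schur's-lemma argument, accepting a possibly nontrivial stabilizer, which is all the lemma requires.
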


Finally, we are ready to state a key preliminary result. We refer to the notation introduced in Lemma~\ref{vertex}. 

\begin{proposition}\label{gamma}
  Let $G$ be a group. Assume that $p$ and $q$ are non-adjacent vertices of $\Delta(G)$, and denote by $P$ and \(Q\) a Sylow $p$-subgroup and a Sylow $q$-subgroup of $G$, respectively. Assume further that  $M = K_p(G)$ is a minimal normal subgroup of \(G\), and that $Q$ is not normal in $G$.
  Then $M$ is abelian, it has a complement in \(G\), and the following conclusions hold.
  \begin{enumeratei}
  \item \(\oh q G=Q\cap\cent G M\).
  \item
    $\o G = G/\cent GM$ is a \(q\)-nilpotent group, \(\fit{\o G}\) is a cyclic group acting  fixed-point freely and irreducibly on $M$, and \(\o G/\fit{\o G}\) is cyclic as well. Also, $1\neq \o{P} \leq \fit{\o G}$ and $\o{Q} \cap \fit{\o G} = 1$.
\item Setting \(|M|=r^m\), we have that $|\o Q|$ divides $m$; also, \(q\) does not divide \(r^m-1\), and
$(r^m -1)/(r^{m/|\o Q|} -1)$ divides $|\fit{\o{G}}|$.
\item If \(N\) is a normal subgroup of G such that \(N\cap M=1\), then 
\(Q\leq\cent G N\).
\item \(\oh p G=P\cap\zent G\).

\end{enumeratei}
\end{proposition}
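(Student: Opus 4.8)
The plan is to exploit the machinery of Lemma~\ref{vertex} together with the hypotheses that $M = K_p(G)$ is minimal normal and that $Q$ is not normal. First I would record the basic structural facts: since $p$ and $q$ are non-adjacent, Proposition~\ref{nilpotency}(a) gives that $G$ is (say) $q$-nilpotent (the case ``$G$ is $p$-nilpotent'' would force $P\nor G$, whence $p\in\nu(G)$ and $M=1$, contrary to minimality), with $P$ and $Q$ abelian, and Lemma~\ref{vertex} gives $[M,P]=M$ so in particular $p$ does not divide $|M|$ (as $M$ is a $p$-complement of $[G,P]$). Minimality of $M$ then forces $M$ to be an elementary abelian $r$-group for a single prime $r\neq p$; write $|M|=r^m$. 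Since $C_G(M)\nor G$ and $M$ is minimal normal with $[M,P]=M\neq 1$, we have $M\cap C_G(M)=1$ or $M\le C_G(M)$; the latter is impossible, so $M\cap C_G(M)=1$ and, $M$ being minimal normal, actually $M$ acts faithfully on itself only trivially — more precisely $C_G(M)$ is a complement-type normal subgroup and $G/C_G(M)$ acts faithfully and (by minimality) irreducibly on $M$. That $M$ has a complement in $G$ follows from Gaschütz's theorem, since $M$ is an abelian normal subgroup that is a Hall subgroup of $[G,P]$ — alternatively one argues via the Schur–Zassenhaus theorem applied inside a suitable subgroup; I would choose whichever is cleanest given that $\gcd(|M|,|G/M\text{-ish}|)$ need not hold globally, so Gaschütz (which only needs $M$ abelian) is the safe route.

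Next I would pass to $\o G = G/C_G(M)$, which acts faithfully and irreducibly on $M$. For (b): in $\o G$ the subgroup $\o P$ is nontrivial (else $P\le C_G(M)$, so $[M,P]=1\neq M$), and $\o G$ is still $q$-nilpotent; I would show $\fit{\o G}$ is cyclic and acts fixed-point freely and irreducibly on $M$ by invoking the standard fact (a solvable irreducible linear group has a cyclic normal subgroup acting fixed-point freely if it is ``minimal irreducible'' — more carefully, one uses that $\o G$ has a normal subgroup which is irreducible, pick a minimal such, then Clifford theory plus the structure of $p$-solvable irreducible groups shows the relevant Fitting piece is cyclic and f.p.f.). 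Here is where I expect the main obstacle: pinning down precisely why $\fit{\o G}$ itself (not merely some normal subgroup) is cyclic and f.p.f. on $M$, and why $\o P\le\fit{\o G}$ while $\o Q\cap\fit{\o G}=1$. The key leverage is that $p$ and $q$ are non-adjacent, hence (by Lemma~\ref{vertex} applied to $\o G$, using $\o{K_p(G)}=K_p(\o G)$) $M=K_p(\o G)=[\o G,\o P]\,$-related, forcing $\o P$ to act without nonzero fixed points and to lie in the Fitting subgroup; and $\o Q$ cannot meet $\fit{\o G}$ because if a nontrivial $q$-element centralized... (one uses that $\o G$ is $q$-nilpotent with abelian $\o Q$, so the normal $q$-complement contains $\fit{\o G}$ unless $\o Q$ is central, which would make $q\notin\V{\o G}$ in a way incompatible with $p,q$ both being vertices — this needs care, and is the delicate point). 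Then $\o G/\fit{\o G}$ embeds in $\aut{\fit{\o G}}$, which is abelian since $\fit{\o G}$ is cyclic, giving the last assertion of (b).

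For (a): $C_G(M)\cap Q = Q\cap C_G(M)$ is normal in $G$ because it is the Sylow $q$-subgroup of the normal $q$-nilpotent-complement... more directly, $\o Q\cap\fit{\o G}=1$ means the image of $Q$ in $\o G$ has trivial intersection with the $q$-part of $\fit{\o G}$, and since $\o G$ is $q$-nilpotent the Sylow $q$-subgroup of $\fit{\o G}$ is the whole $q$-part of $\oh q{\o G}$; chasing this back shows $Q\cap C_G(M)=\oh q G$. For (c): $\o G$ is a $q$-nilpotent irreducible subgroup of $\GL_m(r)$ with cyclic f.p.f. normal subgroup $\fit{\o G}\le\GL_1(r^m)$ of order dividing $r^m-1$, and $\o G/\fit{\o G}$ is cyclic of order dividing $m$ (it embeds in $\Gal(\GF{r^m}/\GF r)$); since $\o Q\cap\fit{\o G}=1$, $|\o Q|$ divides $m$, and $q\nmid r^m-1$ because otherwise a nontrivial $q$-element would lie in the cyclic group $\GL_1(r^m)$ and we could push it into $\fit{\o G}$ or contradict f.p.f.; the divisibility $(r^m-1)/(r^{m/|\o Q|}-1)\mid|\fit{\o G}|$ comes from the standard counting with the subfield $\GF{r^{m/|\o Q|}}$ fixed by a generator of $\o Q$. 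For (d): if $N\nor G$ with $N\cap M=1$ then $[N,M]\le N\cap M=1$ so $N\le C_G(M)$, whence $\o N=1$ and in particular $\o Q$ fixes $N$ pointwise — wait, more precisely $N\le C_G(M)$ only bounds $N$, so instead I argue $Q$ centralizes $N$ by noting $NM/N$-... actually the cleanest route: $QN/N$ corresponds in $G/N$ to a Sylow $q$-subgroup, apply the already-established (a) to $G/N$ (using minimality of $MN/N\cong M$ and $\o{K_p}=K_p$) to get $Q\le C_G(N)$ up to the image, then lift. For (e): $\oh p G$ centralizes $M$ (as $[\oh p G,M]\le\oh p G\cap M=1$ since $p\nmid|M|$), so its image in $\o G$ is trivial, i.e. $\oh p G\le C_G(M)$; combined with $[M,P]=M$ and abelianness of $P$ one gets $\oh p G\le\zent G$ by a short commutator argument ($\oh p G$ is normal, abelian, and centralized by $P$ and by $M$, and $G=\langle P, M, \ldots\rangle$ in the relevant quotient). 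I would present (a)–(e) in this order, with (b) carrying essentially all the weight and the rest being corollaries of the faithful-irreducible picture it establishes.
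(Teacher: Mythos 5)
Your plan has to carry the full weight of the result, because the paper itself does not reprove this proposition: its ``proof'' is a citation of \cite[Proposition~3.1]{CDPS13} and \cite[Proposition~2.5]{DPSS}, and those are precisely where the hard content lives. That hard content is the core of part (b): that \(\o G=G/\cent GM\) embeds in the semilinear group on \(M\), with \(\fit{\o G}\) cyclic, acting fixed-point freely and irreducibly, with \(1\neq\o P\leq\fit{\o G}\) and \(\o Q\cap\fit{\o G}=1\). In your sketch this is exactly the point you flag as ``the main obstacle'' and ``the delicate point'' and then leave unproved; gesturing at Clifford theory and the structure of irreducible solvable linear groups does not yield fixed-point-freeness, nor cyclicity of the \emph{whole} Fitting subgroup, nor the placement of \(\o P\) and \(\o Q\). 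The actual arguments in the cited sources hinge on repeatedly exploiting non-adjacency of \(p\) and \(q\) to manufacture elements whose class size would be divisible by \(pq\) (via Lemma~\ref{product} and Lemma~\ref{vertex}), combined with \cite[Theorem~2.1]{MW}; none of that is carried out here. Since you derive (a), (c) and (e) from (b), the proposal as written rests on an unestablished core. There is also a concretely false step at the outset: \(p\)-nilpotency of \(G\) means \(G\) has a normal \(p\)-complement, which in no way forces \(P\nor G\) (take any group with a normal abelian \(p\)-complement and non-normal \(P\): then \(K_p(G)=[G,P]\neq1\)); so your stated reason for discarding the \(p\)-nilpotent horn of Proposition~\ref{nilpotency}(a) and concluding \(q\)-nilpotency does not work.

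Part (d) is also not salvageable along the route you describe. Conclusion (a) applied to \(G/N\) is a statement identifying \(\oh q{G/N}\) with \(\o Q\cap\cent{\o G}{\o M}\); it says nothing about \(Q\) centralizing \(N\), so there is nothing to ``lift''. Moreover the hypotheses of the proposition need not pass to \(G/N\): you would have to verify that \(MN/N=K_p(G/N)\) (which requires \(p\) to remain a non-complete vertex of \(\Delta(G/N)\)) and, more seriously, \(QN/N\) may well be normal in \(G/N\), in which case the proposition simply does not apply there. The intended mechanism for (d) is again a non-adjacency argument: by Lemma~\ref{vertex} there is \(x\in M\) with \(p\in\pi_G(x)\), and for \(y\in N\) Lemma~\ref{product}(b) gives \(\pi_G(x)\cup\pi_G(y)\subseteq\pi_G(xy)\), which is the lever that forces \(q\notin\pi_G(y)\) and, pushed further through the structure obtained in (b), yields \(Q\leq\cent GN\). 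Smaller issues (the justification of \(q\nmid r^m-1\), and the complement of \(M\) via Gasch\"utz) are plausible but also only sketched; the two items above are the genuine gaps.
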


\begin{proof} This is a reformulation of Proposition~3.1 in \cite{CDPS13} and Proposition~2.5 in \cite{DPSS}; the proof of \cite[Proposition~2.5]{DPSS} includes an explanation of the fact that the hypotheses of \cite[Proposition~3.1]{CDPS13} are fulfilled under our assumptions.
\end{proof}

We conclude this preliminary section with an application of the tools introduced so far.

\begin{proposition}
  \label{pieces}
  Let $G$ be a group, and let $\alpha$, $\beta$ be non-empty and disjoint vertex subsets of $\Delta(G)$ such that there are no edges of \(\Delta(G)\) having one extreme in \(\alpha\) and the other in \(\beta\).
  Assume also that $\nu(G) \cap \alpha= \emptyset = \nu(G) \cap \beta$.
Then, up to interchanging $\alpha$ and $\beta$, there exists a normal subgroup $K$ of $G$ such that
  $K=K_p(G)$ for all $p \in \alpha$ and $K < K_q(G)$ for all $q \in \beta$. 
\end{proposition}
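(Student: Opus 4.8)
The plan is to leverage Proposition~\ref{gamma} applied to a suitable pair of non-adjacent vertices, one in $\alpha$ and one in $\beta$. First I would fix $p \in \alpha$ and $q \in \beta$; by hypothesis $\{p,q\}$ is not an edge of $\Delta(G)$, and since $p,q \notin \nu(G)$ we have $K_p(G) \neq 1 \neq K_q(G)$ by Lemma~\ref{vertex}. I would pass to a chief series and locate a minimal normal subgroup $M$ of $G$ contained in, say, $K_p(G)$; to be able to invoke Proposition~\ref{gamma} I need $M = K_p(G)$ itself to be minimal normal, which is not automatic, so the real first step is to argue that (after possibly swapping $\alpha$ and $\beta$) one may reduce to the situation where $K_p(G)$ is a minimal normal subgroup for every $p \in \alpha$. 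The natural device is to pick a minimal normal subgroup $M$ of $G$ lying inside $\bigcap_{p\in\alpha}K_p(G)$ (note each $K_p(G)$ is a normal $p'$-subgroup, and by Proposition~\ref{nilpotency}(b) and Theorem~\ref{ultimo} the primes in $\alpha$ are pairwise adjacent while $\beta$-primes are non-adjacent to them, so $M$ can be chosen coprime to $\alpha$), work modulo the centralizer $\cent G M$ as in Proposition~\ref{gamma}, and push everything down; the asymmetry between $\alpha$ and $\beta$ in the conclusion ("up to interchanging") is exactly what lets one choose which side plays the role of the "$K_p(G)=M$" side.

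The core argument then runs as follows. Having arranged $M := K_p(G)$ minimal normal for some $p\in\alpha$, with $q\in\beta$ non-adjacent to $p$: if every Sylow $q$-subgroup were normal in $G$ then $q\in\nu(G)$, contradiction, so $Q\notin\syl qG$ is non-normal and Proposition~\ref{gamma} applies. Part (e) gives $\oh p G = P\cap\zent G$, part (a) gives $\oh q G = Q\cap\cent G M$, and part (d) says any normal subgroup of $G$ meeting $M$ trivially centralizes $Q$. I would now show $K_q(G) \cap M = 1$: indeed $K_q(G)=[K_q(G),Q]$ by Lemma~\ref{vertex}, so if $K_q(G)\cap M\neq 1$ then by minimality $M \le K_q(G)$, but $[M,Q]\le[K_q(G),Q]=K_q(G)$ forces (combined with the fixed-point-free cyclic action of $\fit{\o G}$ on $M$ from part (b), and $\o Q\cap\fit{\o G}=1$) a contradiction with $Q$ acting trivially on $M$ via part (a). Hence $K_q(G)\cap M = 1$, so by part (d) $Q \le \cent G{K_q(G)}$, contradicting $[K_q(G),Q]=K_q(G)\neq 1$ — unless in fact the roles must be reversed, which is the mechanism producing "up to interchanging." Running this correctly, one concludes $M \le K_q(G)$ strictly (strictness because $q\notin\pi(M)$ while $K_q(G)$ is not a $q$-group), and then I would set $K := M = K_p(G)$ and verify by the factor-group remark after Lemma~\ref{vertex} — using $\overline{K_p(G)} = K_p(\o G)$ in $\o G = G/\cent G M$ — that $K_{p'}(G) = M$ for every other $p'\in\alpha$ as well, while $K < K_q(G)$ for all $q\in\beta$ by repeating the dichotomy for each such $q$ (the choice of side being already locked in).

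I expect the main obstacle to be the bookkeeping around \emph{which} of $\alpha,\beta$ ends up as the distinguished side, and making the minimality reduction clean: a priori $K_p(G)$ need not be minimal normal, so one genuinely has to chase a minimal normal subgroup $M$ through $\cent G M$, apply Proposition~\ref{gamma} in the quotient, and then lift the conclusion back — keeping track of the fact that $p$ may cease to be (or may remain) a non-complete vertex of the quotient graph, which is exactly the subtlety flagged in the remark following Lemma~\ref{vertex} ("if $p$ is a non-complete vertex also for $\Delta(\o G)$, then $\overline{K_p(G)} = K_p(\o G)$"). Establishing uniformity of $K$ over all of $\alpha$ simultaneously, rather than one prime at a time, is where I would spend the most care; once the single-prime dichotomy $M\le K_q(G)$ versus $Q\le\cent G{K_q(G)}$ is in hand, extending it across $\beta$ and across $\alpha$ is essentially formal.
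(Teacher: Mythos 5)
Your plan has a genuine gap at its foundation: the reduction to ``$M:=K_p(G)$ is minimal normal'' is never carried out, and it cannot be carried out the way you describe. The conclusion of the proposition is about the actual subgroups $K_p(G)$, $K_q(G)$ inside $G$ (equality of all $K_p(G)$ for $p\in\alpha$, strict containment in every $K_q(G)$), and choosing a minimal normal subgroup $M\leq\bigcap_{p\in\alpha}K_p(G)$ and passing to $G/\cent GM$ gives you no control over these subgroups of $G$: the images there may well be trivial, and the remark after Lemma~\ref{vertex} only identifies images in quotients, it does not let you lift equalities or inclusions back to $G$. The paper never assumes any $K_p(G)$ is minimal normal; instead it first proves the containment $K_p\leq K_q$ (up to swapping) directly in $G$, by setting $N=K_p\cap K_q$ and observing that if $N$ were proper in both, then Lemma~\ref{vertex} applied to $G/N$ (where $K_p/N=K_p(G/N)$ and $K_q/N=K_q(G/N)$) produces $x\in K_p$, $y\in K_q$ with $p\in\pi_{G/N}(xN)$ and $q\in\pi_{G/N}(yN)$, so that Lemma~\ref{product}(b) makes $pq$ divide $|(xy)^G|$, against non-adjacency. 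Proposition~\ref{gamma} is then invoked only in quotients $G/L$ with $K_p/L$ a chief factor, where its hypotheses genuinely hold. This $N=K_p\cap K_q$ argument, which is the engine of the proof, is absent from your proposal; your substitute dichotomy is also run incorrectly, since part (a) of Proposition~\ref{gamma} does not say that $Q$ acts trivially on $M$ (the branch $M\leq K_q$ is in fact the true outcome, so the ``contradiction'' you extract there is spurious), and strictness of $M<K_q$ does not follow from $q\notin\pi(M)$ (every $K_q$ is a $q'$-group anyway); the paper gets strictness from the asymmetry in Proposition~\ref{gamma}(b) applied to $G/L$.

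Two further steps you label as routine are precisely where the work lies. First, that the direction of the inclusion is the same for all pairs — i.e.\ that one cannot have $K_p<K_q$ for one pair but $K_{q}<K_{p_0}$ for another $p_0\in\alpha$ — is not ``locked in''; the paper rules this out by working in $\o G=G/\cent G{K_p/L}$ and using Lemma~\ref{action} to show $p_0\nmid|\o G/\fit{\o G}|$, forcing $\o{K_{p_0}}=1$ against $\o{K_{p_0}}\geq\o{K_q}>1$. Second, the uniformity $K_p=K_{p_0}$ for all $p,p_0\in\alpha$ is the most delicate part of the paper's proof (it uses two chief factors $K_p/L$ and $K_{p_0}/L_0$, parts (a) and (d) of Proposition~\ref{gamma}, and the complement of $K_p/L$ in $G/L$ to show first that one of the two is contained in the other and then that $[K_{p_0},Q]\leq K_p$ forces equality); it is not ``essentially formal'', and your appeal to $\o{K_p(G)}=K_p(\o G)$ in $G/\cent GM$ does not establish it.
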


\begin{proof}
 For $p \in \alpha$ and $q \in \beta$, consider the subgroups \(K_p=K_p(G)\) and \(K_q=K_q(G)\):
we will first show that, say, $K_p < K_q$.
Set \(N=K_p\cap K_q\) and assume, working by contradiction, that $N$ is  a proper subgroup of both
$K_p$ and $K_q$. In particular, \(p\) and \(q\) are both (non-complete) vertices of \(\Delta(G/N)\) as well, therefore, as remarked in the paragraph following Lemma~\ref{vertex}, we have $K_p/N = K_p(G/N)$ and $K_q/N = K_q(G/N)$. Now, an application of Lemma~\ref{vertex} to the factor group \(G/N\) yields that there exist two elements \(x\in K_{p}\), \(y\in K_{q}\) such that \(p\in\pi_{G/N}(xN)\) and \(q\in\pi_{G/N}(yN)\); by Lemma~\ref{product}(b), we see that \(pq\) divides \(|(xyN)^{G/N}|\), thus it divides \(|(xy)^G|\) contradicting the fact that \(p\) and \(q\) are non-adjacent in \(\Delta(G)\). We conclude that (say) \(K_{p}=N\), whence \(K_{p}\leq K_{q}\). Also, if \(L\) is a normal subgroup of \(G\) such that \(K_{p}/L\) is a chief factor of \(G\) (so, as above, $K_p/L = K_p(G/L)$), then we can apply Proposition~\ref{gamma}(b) to the group \(G/L\), obtaining that $\o G = G/\cent G{K_{p}/L}$ has a normal Sylow \(p\)-subgroup, and a (non-trivial) Sylow \(q\)-subgroup intersecting \(\fit{\o G}\) trivially. In particular, the roles of \(p\) and \(q\) are not symmetric, and therefore the inclusion of \(K_{p}\) in \(K_{q}\) must be proper. Up to interchanging $p$ and $q$, we thus have $K_p < K_q$.

Next, we claim that \(K_{p_0} < K_{q}\) for every choice of $p_0 \in \alpha$.
In fact, assuming this does not hold, the paragraph above yields \(K_{q} <  K_{p_0}\); working in the factor group $\o G = G/\cent G{K_{p}/L}$ as above,
by Lemma~\ref{action} we have that $p_0$ does not divide $|\o G /\fit{\o G}|$, so
$\o{K_{p_0}} = 1$, a contradiction as $\o{K_{p_0}} \geq \o{K_q} > 1$. 
 Note that, by essentially the same argument, we can see that \(K_{p} < K_{q_0}\) holds as well for every choice of \(q_0 \in \beta \). 

We work now to show that, for every choice of $p , p_0 \in \alpha$, we have \(K_{p} = K_{p_0}\).  
First, let us see that one of these two subgroups is contained in the other. For a proof by contradiction, assume that \(N=K_{p}\cap K_{p_0}\) is properly contained in both \(K_{p}\) and \(K_{p_0}\). So, we can take normal subgroups \(L\) and \(L_0\) of \(G\), containing \(N\), such that \(K_{p}/L\) and \(K_{p_0}/L_0\) are chief factors of \(G\). Let \(Q\) be a Sylow \(q\)-subgroup of \(G\), where \(q\) lies in \(\beta\); by Proposition~\ref{gamma}(d) applied to the factor group \(G/L\), the normal subgroup \(K_{p_0}L/L\) (which intersects \(K_{p}/L\) trivially) is centralized by \(QL/L\), therefore \([K_{p_0}, Q]\leq L\). But clearly \([K_{p_0}, Q]\) also lies in \(K_{p_0}\), hence it lies in \(N\). In particular, \(QL_0/L_0\) centralizes \(K_{p_0}/L_0\), and thus  Proposition~\ref{gamma}(a) (applied to $G/L_0$) yields $QL_0/L_0 \nor G/L_0$, so $K_q \leq L_0 \leq K_p$, a contradiction by the previous paragraph. Our conclusion so far is that (say) \(K_{p}\leq K_{p_0}\), and it remains to show that equality holds. To this end, setting \(\o G=G/\cent{G}{K_{p}/L}\), observe first that \(\o{K_{p_0}}=1\). Otherwise, setting \(P_0\) to be a Sylow \(p_0\)-subgroup of \(G\), \(\o {K_{p_0}}\) would be a non-trivial (normal) \(p_0'\)-subgroup of \([\o G, \o{P_0}]\), thus \(\o G\) would not have a normal Sylow \(p_0\)-subgroup, yielding \(p_0\mid |\o G/\fit {\o G}|\); but Proposition~\ref{gamma}(b) ensures that also \(q\) divides \(|\o G/\fit {\o G}|\), so that (by Lemma~\ref{action}) \(p_0q\) divides the size of some conjugacy class of \(\o G\), a contradiction. Finally, we know by Proposition~\ref{gamma} that \(K_{p}/L\) has a complement \(H/L\) in \(G/L\), so, in particular, \(K_{p_0}=K_{p}(K_{p_0}\cap H)\); as \((K_{p_0}\cap H)/L\) is normal in \(H/L\) and it centralizes \(K_{p}/L\), we get that \((K_{p_0}\cap H)/L\) is a normal subgroup of \(G/L\) intersecting \(K_{p}/L\) trivially. An application of Proposition~\ref{gamma}(d) to the factor group \(G/L\) gives \([K_{p_0}\cap H, Q]\leq L\), whence \([K_{p_0},Q]=[K_{p}(K_{p_0}\cap H), Q]\leq K_{p}\). But now, if \(K_{p_0}\) is strictly larger than \(K_{p}\), we can take a subgroup \(L_0\) of \(G\), containing \(K_{p}\), such that \(K_{p_0}/L_0\) is a chief factor of \(G\). Proposition~\ref{gamma}(a) applied to \(G/L_0\) yields \([K_{p_0},Q]\not\leq L_0\), a contradiction. We conclude that, in fact, \(K_{p_0}=K_{p}\) holds. 

Therefore, we have proved that $K = K_ p < K_q$ for all $p \in \alpha$ and $q \in \beta$.
\end{proof}

\section{Proof of Theorem A}

In this section we prove Theorem~A, whose statement is recalled next. Our proof relies essentially on Theorem~\ref{ultimo}, and on some easy graph-theoretical considerations.

\begin{thmA}
  Let $G$ be a group such that \(\Delta(G)\) has a cut vertex \(r\). Then the following conclusions hold.
  \begin{enumeratei} 
  \item \(G\) is a solvable group whose Fitting height is at most \(3\).
  \item $\Delta(G) - r$ is a graph with two connected components, that are both complete subgraphs. 
  \item If \(r\) is a complete vertex of \(\Delta(G)\), then it is the unique complete vertex and the unique cut vertex of \(\Delta(G)\). If \(r\) is non-complete, then \(\Delta(G)\) is a graph of diameter \(3\), and it can have at most two cut vertices.
  \end{enumeratei}
\end{thmA}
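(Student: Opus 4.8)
My plan is to first extract the combinatorial skeleton of $\Delta(G)$ from Theorem~\ref{ultimo}, and then combine it with the group-theoretic results of Section~2. I would begin by reducing to the case where $G$ is reduced: passing to $G/Z$ for a normal $Z$ with $G'\cap Z=1$ changes neither $\Delta(G)$ (Proposition~\ref{rid}) nor $\h{G}$ (such $Z$ is central, hence contained in $\fit{G}$), so we may assume $\V{G}=\pi(G)$. Next, $\Delta(G)$ must be connected, since by Proposition~\ref{disconnected} a disconnected $\Delta(G)$ has two complete components and a complete graph has no cut vertex; hence $\Delta(G)-r$ has at least two components. Writing $\V{G}=\pi_1\cup\pi_2$ with $\pi_1,\pi_2$ inducing complete subgraphs (Theorem~\ref{ultimo}), each clique $\pi_i\setminus\{r\}$ is connected, hence lies in a single component of $\Delta(G)-r$; as these cliques cover $\V{G}\setminus\{r\}$, there are at most two components. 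So $\Delta(G)-r$ has exactly two components, with complete vertex sets $\alpha:=\pi_1\setminus\{r\}$ and $\beta:=\pi_2\setminus\{r\}$ — this is part~(b) — and $\pi_1\cap\pi_2\subseteq\{r\}$, so $\alpha\cap\beta=\emptyset$, there is no edge of $\Delta(G)$ joining $\alpha$ to $\beta$, both sets are non-empty, and $r$ has a neighbour in each. I will also repeatedly use that, by Theorem~\ref{ultimo}, $\Delta(G)$ contains no independent set of size~$3$.

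Part~(c) is then pure graph theory on this picture. If $r$ is complete: any other vertex $s$ lies in $\alpha$ or $\beta$, hence misses the whole (non-empty) opposite part and is not complete; and $r$ dominates $\Delta(G)-s$, which is therefore connected, so $s$ is not a cut vertex. This gives that $r$ is the unique complete vertex and the unique cut vertex. If $r$ is non-complete: choose a vertex non-adjacent to $r$; interchanging $\alpha$ and $\beta$ if necessary we may take it in $\beta$, and then the no-independent-triple property forces $r$ adjacent to every vertex of $\alpha$. Setting $\beta_1=\{q\in\beta:q\sim r\}$ and $\beta_0=\beta\setminus\beta_1$ (both non-empty, by connectedness and by non-completeness of $r$, respectively), $\Delta(G)$ is the union of the cliques $\alpha\cup\{r\}$ and $\beta$ joined only by the edges from $r$ to $\beta_1$; inspecting this graph yields that all distances are at most~$3$, that a vertex of $\alpha$ and a vertex of $\beta_0$ are at distance exactly~$3$ (so the diameter is~$3$), and that the cut vertices are precisely $r$, together with the unique vertex of $\beta_1$ when $|\beta_1|=1$ — hence at most two.

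For the solvability assertion in part~(a): choosing $p\in\alpha$, $q\in\beta$, Proposition~\ref{nilpotency}(b) applied with the set $\alpha$ (all of whose vertices are non-adjacent to $q$), and symmetrically with $\beta$, gives that $G$ is $\alpha$- and $\beta$-solvable with abelian Hall $\alpha$- and $\beta$-subgroups $A$, $B$. A non-abelian composition factor $S$ of $G$ would then satisfy $\pi(S)\cap(\alpha\cup\beta)=\emptyset$ while $\pi(S)\subseteq\pi(G)=\alpha\cup\beta\cup\{r\}$, forcing $\pi(S)\subseteq\{r\}$, which is impossible; so $G$ is solvable, and in particular has Hall subgroups for every set of primes.

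It remains to bound $\h{G}$. If $r$ is non-complete, then, with the notation of~(c), the whole set $\alpha\cup\{r\}$ consists of vertices non-adjacent to any fixed $q_0\in\beta_0$, so a Hall $(\alpha\cup\{r\})$-subgroup $H$ of $G$ is abelian by Proposition~\ref{nilpotency}(b); since $B$ is abelian and $G=HB$ ($H$, $B$ being Hall subgroups of coprime orders with $|H|\,|B|=|G|$), Itô's theorem on products of two abelian subgroups gives that $G$ is metabelian, so $\h{G}\le 2$. If $r$ is complete: by Proposition~\ref{nilpotency}(a) and a pigeonhole argument (if $G$ were neither $p_0$-nilpotent for some $p_0\in\alpha$ nor $q_0$-nilpotent for some $q_0\in\beta$, the pair $(p_0,q_0)$ would give a contradiction), up to interchanging $\alpha$ and $\beta$ we may assume $G$ is $p$-nilpotent for every $p\in\alpha$, so $G$ has a normal Hall $\alpha'$-subgroup $N=\oh{\alpha'}{G}$ with $G/N\cong A$ abelian; then $\h{G}\le\h{N}+1$, and it suffices to prove $\h{N}\le 2$, where $\pi(N)=\beta\cup\{r\}$ and $N$ inherits an abelian Hall $\beta$-subgroup. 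Several easy cases now settle most of this: if $\Delta(N)$ is disconnected then $\h{N}\le 2$ by Proposition~\ref{disconnected}; if $r\notin\V{N}$ or $\beta\cap\V{N}=\emptyset$ then $N$ is the direct product of its (nilpotent) Sylow $r$-subgroup and its abelian Hall $\beta$-subgroup, hence $\h{N}=1$; and if some $q\in\beta\cap\V{N}$ is non-adjacent to $r$ in $\Delta(N)$ then Proposition~\ref{nilpotency}(b) applied to $N$ with the singleton $\{r\}$ makes the Sylow $r$-subgroup of $N$ abelian, so once more $N$ is a product of two abelian subgroups and $\h{N}\le 2$ by Itô's theorem. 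The only surviving configuration — $\Delta(N)$ connected with $r$ a complete vertex of it — is the one I expect to be the real obstacle: here one has to exploit the finer structure of $N$ at its non-complete vertices in $\beta$, namely the characteristic subgroups $K_q(N)$ of Lemma~\ref{vertex} and Propositions~\ref{gamma} and~\ref{pieces}, together with the facts that $\cent{G}{N}\le N$ (which one checks using reducedness of $G$, so that $A$ acts faithfully and coprimely on $N$) and that $\Delta(G)$ has no edge joining $\alpha$ to $\beta$, in order to force $N$ to be metanilpotent. Everything else reduces to Theorem~\ref{ultimo}, Itô's theorem, and bookkeeping.
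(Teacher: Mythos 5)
Parts (b) and (c) of your argument are sound and essentially coincide with the paper's proof (the paper quotes \cite{CD} for the diameter bound, while you verify it directly on the two-clique picture; either is fine), and your solvability argument in (a) — $\alpha$- and $\beta$-solvability from Proposition~\ref{nilpotency}(b) plus the observation that a non-abelian composition factor would be an $r$-group — is a correct, self-contained alternative to the paper's route. Likewise, when $r$ is non-complete, your application of Proposition~\ref{nilpotency}(b) to $\alpha\cup\{r\}$ (all non-adjacent to a vertex of $\beta_0$) followed by It\^o's theorem correctly gives that $G$ is metabelian.

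The genuine gap is the Fitting height bound when $r$ is a complete vertex. You reduce correctly to showing $\h N\le 2$ for the normal Hall $\alpha'$-subgroup $N$ (with $G/N$ abelian), dispose of the easy configurations, and then explicitly leave the remaining one — $\Delta(N)$ connected with $r$ adjacent in $\Delta(N)$ to every vertex of $\beta\cap\V N$ — as something you ``expect'' to handle with $K_q(N)$, Proposition~\ref{gamma} and Proposition~\ref{pieces}. That is not a routine verification: the hypotheses you have on $N$ alone (an $\{r\}\cup\beta$-group with abelian Hall $\beta$-subgroups) do not bound its Fitting height by $2$ — for instance $P_2\rtimes(P_1\rtimes C_q)$ with faithful actions, $P_1,P_2$ being $r$-groups, has Fitting height $3$ and abelian Sylow $q$-subgroups — so one must genuinely exploit the coprime action of the Hall $\alpha$-subgroup on $N$ and the absence of $\alpha$--$\beta$ edges in $\Delta(G)$. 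This surviving case is exactly the one leading to the type {\bf(I)} groups, and making it work is the substantial analysis carried out in Theorem~\ref{completecut1} (primitive prime divisors, semilinear structure of $G/\cent G{K/L}$, etc.); the paper's proof of Theorem~A sidesteps all of this by invoking Theorem~A of \cite{CDPS12}, which states that a group whose class-size prime graph has at most one complete vertex is solvable of Fitting height at most $3$. As written, then, the Fitting height assertion of part (a) is unproven in precisely the hardest case; you should either carry out the missing analysis along the lines of Theorem~\ref{completecut1} or, as the paper does, appeal to \cite{CDPS12} after noting that $\Delta(G)$ has at most one complete vertex.
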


\begin{proof}
By Theorem~\ref{ultimo},  the vertex set \(\V G\) of \(\Delta(G)\) can be partitioned in two subsets, each inducing a complete subgraph of \(\Delta(G)\): we write the part containing \(r\) as \(\{r\}\cup\alpha\), and we denote by \(\beta\) the other one (note that both \(\alpha\) and \(\beta\) are non-empty in this situation). 

Since the graph  \(\Delta(G)-r\) is not  connected, there are no edges of \(\Delta(G)\) having one extreme in \(\alpha\) and the other extreme in \(\beta\). We conclude that \(\Delta(G)-r\) is a graph whose connected components are the two cliques \(\alpha\) and \(\beta\), so (b) is proved.

On the other hand, since the existence of a cut vertex \(r\) for \(\Delta(G)\) implies that \(\Delta(G)\) is connected by Proposition~\ref{disconnected}, \(r\) must be adjacent to some vertex of \(\beta\), and we have the following dichotomy that proves (c).

\begin{enumeratei}
\item[\(\bullet\)]{\sl The cut vertex \(r\) is a complete vertex}. Then, \(r\) is obviously the unique complete vertex and the unique cut vertex of \(\Delta(G)\). 
\item[\(\bullet\)]{\sl The graph \(\Delta(G)\) has no complete vertices at all}. In this situation, it follows at once that a minimal path connecting a vertex in \(\alpha\) to a vertex (in \(\beta\)) not adjacent to \(r\) has length \(3\). Recalling that, whenever \(\Delta(G)\) is connected, its diameter is at most \(3\) (and a characterization of groups for which the bound is attained can be found in \cite{CD}), the claim of (c) concerning the diameter is proved. Also, if \(t\) is another cut vertex of \(\Delta(G)\), then it is easily seen that \(t\) lies in \(\beta\), and \(\{r,t\}\) is the unique edge of \(\Delta(G)\) involving a vertex in \(\{r\}\cup\alpha\) and a vertex in \(\beta\). As a consequence, \(\Delta(G)\) has at most two cut vertices.
\end{enumeratei}

Finally note that, in both the situations described above, the graph \(\Delta(G)\) has at most one complete vertex. Therefore we can apply Theorem~A of \cite{CDPS12}, which yields conclusion (a) and completes the proof.
\end{proof}

\section{Proof of Theorem B}

We will now tackle the substantial part of our analysis. Before proving Theorem~B in full, we will treat separately one of the cases that may occur (namely, the situation that leads to conclusion \({\bf (I)}\) in the statement of Theorem~B). Recall that, for a group \(G\), we defined $\nu(G)$ as the set of the primes $t\in\pi(G)$ such that $G$ has a normal Sylow $t$-subgroup; also, in the following statement, \(\frat G\) denotes the Frattini subgroup of the group \(G\). 

\begin{theorem}\label{completecut1} Let \(G\) be a reduced group such that \(\Delta(G)\) has a cut vertex \(r\), and let \(R\) be a Sylow \(r\)-subgroup of \(G\). Denoting by \(\alpha\) and \(\beta\) the vertex sets of the two complete connected components of \(\Delta(G)-r\), assume that
  $\nu(G) \cap \alpha = \emptyset = \nu(G) \cap \beta$.  Then, up to interchanging \(\alpha\) and \(\beta\), the following conclusions hold.
\begin{enumeratei}
\item \(\fit G = R\). 
\item Set \(\Phi=\frat R\) and \(K=K_p(G)\), for some \(p\in\alpha\). Then we have $R/\Phi = K\Phi/\Phi \times \zent{G/\Phi}$, and \(K\Phi/\Phi\) is a chief factor of \(G\) whose centralizer in \(G\) is \(R\). 
Furthermore, setting \(\o G=G/R\), we have that \(\fit{\o G}\) is cyclic, it is the \(\alpha\)-Hall subgroup of \(\o G\), and it acts fixed-point freely and irreducibly on $K\Phi/\Phi$. Finally, \(\beta\) consists of a single prime \(q\), \(G\) is \(q\)-nilpotent  and \(|\o G/\fit{\o G}|=q\).

\end{enumeratei}
\end{theorem}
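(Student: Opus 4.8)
The plan is to extract as much as possible from Proposition~\ref{pieces} and Proposition~\ref{gamma}, applied to a pair of non-adjacent vertices lying on opposite sides of the cut vertex $r$. Since $r$ is a cut vertex, there are no edges between $\alpha$ and $\beta$ in $\Delta(G)$, and by hypothesis $\nu(G)$ meets neither set; hence Proposition~\ref{pieces} applies to the pair $(\alpha,\beta)$. Up to interchanging the two sides, it produces a normal subgroup $K=K_p(G)$ of $G$ that is \emph{independent} of the choice of $p\in\alpha$, and satisfies $K < K_q(G)$ for every $q\in\beta$. Fix such a $p\in\alpha$ and a $q\in\beta$, with Sylow subgroups $P\in\syl pG$, $Q\in\syl qG$; note $p,q$ are non-adjacent. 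The first real step is to pass to a chief factor: choose $L\nor G$ with $K/L$ a chief factor of $G$, so that $K/L = K_p(G/L)$, and apply Proposition~\ref{gamma} to $G/L$. This requires checking that the Sylow $q$-subgroup is not normal in $G/L$ — which holds because $q\notin\nu(G)$ forces $K_q(G)\neq 1$, and in fact (since $K<K_q(G)$ and $\overline{K_q(G)}$ would be a non-trivial normal $q'$-subgroup of $[\overline{G},\overline Q]$) $Q$ cannot be normal modulo $\cent G{K/L}$; I would phrase this carefully. The output of Proposition~\ref{gamma} gives: $K/L$ is abelian with a complement in $G/L$, $\overline{G}=G/\cent G{K/L}$ is $q$-nilpotent with $\fit{\overline G}$ cyclic, acting fixed-point freely and irreducibly on $K/L$, and $\overline G/\fit{\overline G}$ cyclic with $1\neq\overline P\leq\fit{\overline G}$ and $\overline Q\cap\fit{\overline G}=1$.

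Next I would pin down the structure of $\fit G$ and identify $R$. The key leverage is part (e) of Proposition~\ref{gamma}: $\oh pG = P\cap\zent G$, combined with the analogous statement for every prime of $\alpha$ (using the independence of $K$ from $p\in\alpha$) and with Proposition~\ref{gamma}(d), which says any normal subgroup of $G$ meeting $K/L$ trivially is centralized by $Q$. Since $K$ is an $r'$-group? — no: $K=K_p(G)$ is the normal $p$-complement of $[G,P]$, so $K$ is a $p'$-group, and this holds for all $p\in\alpha$, so $K$ is an $\alpha'$-group; I expect to show more, namely that $\fit G$ is precisely $R$, by arguing that no prime of $\alpha$ or $\beta$ can divide $|\fit G|$ beyond the central part, which is already accounted for by reducedness ($\V G=\pi(G)$, so $G$ has no central Sylow subgroup for any prime in $\pi(G)$). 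Concretely: if $t\in\alpha$ divided $|\fit G|$ non-centrally we would contradict $\oh tG=P_t\cap\zent G$; a symmetric argument via $K < K_q(G)$ handles $\beta$; hence $\fit G$ is an $r$-group, and since $R\nor$? — I would deduce $R=\oh rG=\fit G$ by showing $\oh{r'}G=1$. This gives (a).

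For (b): set $\Phi=\frat R$ and work modulo $\Phi$. Since $R/\Phi$ is an elementary abelian $r$-group on which $G/R$ acts, it decomposes under the coprime action of (a Hall $r'$-complement of) $G$; I claim $R/\Phi = K\Phi/\Phi \times \zent{G/\Phi}$, with $K\Phi/\Phi$ a chief factor. The point is that $K$, being the normal $p$-complement of $[G,P]$ and contained in $R=\fit G$, must be an $r$-group contained in $R$; its image mod $\Phi$ is the normal $p$-complement of $[\overline G_\Phi,\overline P]$ where $\overline G_\Phi=G/\Phi$, and by Lemma~\ref{vertex} applied mod $\Phi$ together with the irreducibility coming from Proposition~\ref{gamma}(b) (note $K/L$ with $L\geq\Phi\cap K$ — one checks $\Phi\cap K\leq L$ by minimality/Frattini arguments, or more simply that $K\Phi/\Phi$ is itself a chief factor because its $G$-endomorphism algebra is a field by the f.p.f. action), we get $K\Phi/\Phi$ irreducible with $\cent G{K\Phi/\Phi}=R$. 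The complement part $\zent{G/\Phi}$ is what is left of $R/\Phi$ after removing $K\Phi/\Phi$; since everything outside $R$ acts trivially on it (as $R/\Phi=K\Phi/\Phi\oplus C$ forces $C$ centralized — any further non-central chief factor inside $R$ would give a second prime-pair edge across $\alpha,\beta$ or violate $\fit{\overline G}$ being the full $\alpha$-Hall subgroup), it is central in $G/\Phi$. Then, setting $\overline G=G/R$: $\fit{\overline G}=\fit{\overline G}$ is cyclic (image of $\fit{\overline G}_{K/L}$, which is cyclic by Proposition~\ref{gamma}(b), and equals the whole $\alpha$-Hall subgroup because, by Proposition~\ref{pieces}'s independence, every $p'\in\alpha$ has $\overline{P'}\leq\fit{\overline{G/L}}$, and conversely $\overline Q\cap\fit{\overline G}=1$ pushes all of $\beta$ out of $\fit{\overline G}$). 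Finally $\beta=\{q\}$ and $|\overline G/\fit{\overline G}|=q$: since $\overline G/\fit{\overline G}$ is cyclic (Proposition~\ref{gamma}(b)) and $q$-nilpotent, and $\beta$ must be a clique disjoint from $\fit{\overline G}$ lying inside the cyclic quotient $\overline G/\fit{\overline G}$, if $|\beta|\geq 2$ or $|\overline G/\fit{\overline G}|$ had a second prime, Lemma~\ref{action} applied to $\overline G$ (which has abelian Fitting quotient) would produce a class size divisible by two distinct primes one of which sits in $\beta$ and the other in $\alpha$ — contradicting that there is no $\alpha$–$\beta$ edge. So $\overline G/\fit{\overline G}$ is a $q$-group, cyclic, hence of order $q$ by reducedness/minimality considerations, and $\beta=\{q\}$. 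The $q$-nilpotency of $G$ follows since $R=\fit G$ is the normal $q$-complement.

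The main obstacle I anticipate is the bookkeeping around the choice of $L$ and the interplay between $\Phi=\frat R$ and the chief series: showing cleanly that $K\Phi/\Phi$ is itself a chief factor with centralizer exactly $R$ (not merely that some section of it is), and that $R/\Phi$ splits as claimed with the complement landing in the centre of $G/\Phi$. This needs the irreducibility of the Fitting action from Proposition~\ref{gamma}(b) to be transported faithfully from $G/L$ to $G/\Phi$, and a ruling-out of any ``extra'' non-central $r$-chief factor — which is exactly where the cut-vertex hypothesis (no edges across $\alpha,\beta$, at most one complete vertex) does its work, via Lemma~\ref{three} or a direct Lemma~\ref{product} argument producing a forbidden edge.
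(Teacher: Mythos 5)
There are genuine gaps, and they sit exactly at the hard points of the argument. First, your route to conclusion (a) does not work: from ``\(\fit G\) is an \(r\)-group'' (equivalently \(\oh{r'}G=1\)) you cannot deduce \(\fit G=R\), because that would require the full Sylow \(r\)-subgroup to be \emph{normal}, which is a substantial claim and not a formal consequence of \(\oh{r'}G=1\) (compare \(S_4\) with \(r=2\)). In the paper the normality of \(R\) is a separate step that is proved \emph{after}, and by means of, the facts that \(\pi(\o G/\fit{\o G})=\{q\}\) and that \(G\) is \(q\)-nilpotent: one takes a complement \(H\) of \(K\) containing the abelian Hall \(\alpha\)-subgroup \(A\), shows via Proposition~\ref{gamma}(e) that \(A\cap\cent HK\) is central, and then uses the \(q\)-nilpotency of \(\cent HK\) (whose \(q\)-complement is \(R_0\times A_0\)) to produce a normal Sylow \(r\)-subgroup \(R_0K\) of \(G\). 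Your proposal supplies none of this, and moreover your later assertion that ``\(q\)-nilpotency follows since \(R=\fit G\) is the normal \(q\)-complement'' is false: \(R\) is an \(r\)-group, not a Hall \(q'\)-subgroup; \(q\)-nilpotency is obtained instead from Proposition~\ref{nilpotency}(a) together with the fact that \(\fit{\o G}\) is not central in \(\o G\).

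Second, your argument that \(\beta=\{q\}\) and \(|\o G/\fit{\o G}|=q\) via Lemma~\ref{action} does not produce the needed contradiction. Lemma~\ref{action} applied to \(\o G\) gives an element whose class size is divisible by all primes of \(\pi(\o G/\fit{\o G})\), but these primes lie in \(\beta\cup\{r\}\); they are pairwise adjacent in \(\Delta(G)\) anyway, so no forbidden \(\alpha\)--\(\beta\) edge arises, even if \(|\beta|\ge 2\) or \(r\) divides \(|\o G/\fit{\o G}|\). To get a contradiction one must manufacture a \emph{commuting} pair consisting of an element of \(\fit{\o G}\) (the \(\alpha\)-Hall part) whose class size is divisible by a prime of \(\beta\), and an element outside \(\cent{\o G}{\fit{\o G}}\); the paper does this through the arithmetic of \(\Gamma_0(K/L)\), using Proposition~\ref{gamma}(c) and primitive prime divisors of \(r^{m/|\o T|}-1\) (and a second such argument to push \(|\o G/\fit{\o G}|\) from a \(q\)-power down to \(q\) --- ``reducedness/minimality considerations'' do not accomplish this). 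Your setup (Proposition~\ref{pieces}, passage to a chief factor \(K/L\), Proposition~\ref{gamma}, and the use of \cite[Proposition~3.1]{CDPS13} modulo \(\frat R\) for part (b)) does match the paper's skeleton, and the identification of \(Z/\Phi\) as central could be repaired with Proposition~\ref{gamma}(d) as in the paper; but as written the two Zsigmondy-type steps and the normality of \(R\) are missing, and these are the core of the proof.
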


\begin{proof}

  \medskip
  An application of Proposition~\ref{pieces} to the sets $\alpha$ and $\beta$ yields (up to interchanging \(\alpha\) and \(\beta\)) that $K=K_p(G) \nor G$ for all $p \in \alpha$, and $K < K_q(G)$ for all $q \in \beta$. 
  As $\pi(G) = \{ r \}\cup\alpha \cup \beta$, and $K_t(G)$ is a $t'$-subgroup for all
  $t \in \alpha \cup \beta$, we see that $K$ is an $r$-group, so $K \leq R $.

 Let now  \(L\trianglelefteq G\) be such that \(K/L\) is a chief factor of \(G\). An application of Proposition~\ref{gamma}(b) to the factor group \(G/L\) (together with Theorem~2.1 of \cite{MW}) yields that  $\o G = G/\cent G{K/L}$ is a subgroup of the  group of semilinear maps $\Gamma(K/L)$  on \(K/L\), with the cyclic group \(\fit{\o G}\) lying in the subgroup $\Gamma_0(K/L)$ of multiplication maps, and acting (fixed-point freely and) irreducibly on \(K/L\). Also, we get that \(\fit{\o G}\) is the \(\alpha\)-Hall subgroup of \(\o G\) and, taking into account Lemma~\ref{action}, \(\beta\subseteq \pi(\o G/\fit{\o G})\subseteq \beta\cup\{r\}\). As we will see, it turns out that \(\cent G{K/L}\) is in fact \(R\). We proceed through a number of steps.

\smallskip
{\bf{Step 1.}} {\sl The order of \(\o G/\fit{\o G}\) is a power of a prime \(q\) in~\(\beta\)} (hence \(\beta\) consists of a single prime). 

For a proof by contradiction, assume that \(|\o G/\fit{\o G}|\) is divisible by two distinct primes \(q\) and \(t\) (where \(q\in\beta\) and possibly \(t=r\)), let \(\o Q\) be a Sylow \(q\)-subgroup and \(\o T\) a Sylow \(t\)-subgroup of \(\o G\); setting \(|K/L|=r^m\),
 we observe that \(\o T\) is cyclic, hence the order of \(\cent{\Gamma_0(K/L)}{\o T}\) is \(r^{m/|\o T|}-1\) (see \cite[Lemma~3(i)]{D}). Observe also that there exists a primitive prime divisor \(s\) of \(r^{m/|\o T|}-1\): in fact, this is not the case only if \(m/|\o T|=2\) or \(r^{m/{|\o T|}}=2^6\). But in the former situation, by Proposition~\ref{gamma}(c), we have \(q=2\) against the fact that \(q\) does not divide \(r^m-1\); on the other hand, if \(r^{m/{|\o T|}}=2^6\), then \(q=3\) divides \(2^6-1\), again a contradiction. Now, \(s\) is certainly a divisor of \(r^m-1\), but in fact it also divides \((r^m-1)/(r^{m/|\o Q|}-1)\); otherwise, \(s\) is a common divisor of \(r^{m/|\o T|}-1\) and \(r^{m/|\o Q|}-1\), thus it divides \(r^{d}-1\) where $d={\rm{g.c.d.}}(m/|\o Q|,m/|\o T|)$ and (since \(s\) is a primitive prime divisor of \(r^{m/|\o T|}-1\)) we get that \({m/|\o T|}\) divides \({m/|\o Q|}\), a clear contradiction. Again by Proposition~\ref{gamma}(c), it follows that \(s\) divides \(|\fit{\o G}|\), i.e., there exists an element \(\o x\) of \(\fit{\o G}\) whose order is \(s\); recalling that \(\Gamma_0(K/L)\) is cyclic and it has a unique subgroup of order \(s\), we deduce that \(\o x\) is centralized by \(\o T\). Since, as already observed, \(s\) does not divide \(r^{m/|\o Q|}-1=|\cent{\Gamma_0(K/L)}{\o Q}|\), we deduce that \(\o x\) is not centralized by any Sylow \(q\)-subgroup of \(\o G\), whence \(q\) lies in \(\pi_{\o G}(\o x)\). Also, if \(\o y\) is a generator of \(\o T\), certainly \(\o y\) does not centralize \(\fit{\o G}\); as a consequence, \(\pi_{\o G}(\o y)\) contains a prime \(p\) in \(\alpha\). We conclude that \(|(\o {xy})^{\o G}|\) is divisible by \(pq\), which is not the case. This contradiction shows that \(|\o G/\fit{\o G}|\) is a power of \(q\in\beta\), as claimed.

\smallskip
{\bf{Step 2.}} {\sl \(G\) is \(q\)-nilpotent.} 

In fact, if we assume the contrary, then \(G\), and hence \(\o G\), is \(p\)-nilpotent for every \(p\) in \(\alpha\) (see Proposition~\ref{nilpotency}), but this implies that \(\fit{\o G}\) is central in \(\o G\), which is definitely not the case. 

\smallskip
{\bf{Step 3.}} {\sl The order of \(\o G/\fit{\o G}\) is \(q\).} 

For a proof by contradiction, assume \(|\o G/\fit{\o G}|=q^a\) with \(a>1\). Let \(\o Q\) be a Sylow $q$-subgroup of \(\o G\) and consider a subgroup \(\o{Q_0}\) of \(\o Q\) such that \(|\o{Q_0}|=q^{a-1}\). Writing \(m=q^ab\), we have \(|\cent{\Gamma_0(K/L)}{\o {Q_0}}|=r^{bq}-1\), whereas \(|\cent{\Gamma_0(K/L)}{\o {Q}}|=r^{b}-1\). If \(|\cent{\fit{\o G}}{\o {Q}}|\) is strictly smaller than \(|\cent{\fit{\o G}}{\o {Q_0}}|\), then we can choose \(\o x\in \cent{\fit{\o G}}{\o {Q_0}}\) whose conjugacy class size in \(\o G\) is divisible by~\(q\); on the other hand, a generator \(\o y\) of \(\o {Q_0}\) does not centralize \(\fit{\o G}\), hence its conjugacy class in \(\o G\) has a size divisible by a prime \(p\in\alpha\). But now we get the contradiction that \(pq\) divides \(|(\o{xy})|^{\o G}\). In view of this, it will be enough to show that \(|\cent{\fit{\o G}}{\o {Q}}|<|\cent{\fit{\o G}}{\o {Q_0}}|\) holds.

Recalling that \(\Gamma_0(K/L)\) is a cyclic group, what we need to prove is \[{\rm{g.c.d.}}(r^b-1,\;|\fit{\o G}|)\neq{\rm{g.c.d.}} \left((r^b-1)\left(\dfrac{r^{bq}-1}{r^b-1}\right),\;|\fit{\o G}|\right).\] Assuming the contrary, and considering that \((r^{bq}-1)/(r^b-1)\) is a divisor of \(|\fit{\o G}|\) by Proposition~\ref{gamma}(c), we would get that \((r^{bq}-1)/(r^b-1)\) divides \(r^b-1\), hence \(r^{bq}-1\) divides \((r^b-1)^2\). Since it is not difficult to see, as we did above, that \(r^{bq}-1\) has a primitive prime divisor, we reached a contradiction, and our claim is proved.

\smallskip
{\bf{Step 4.}} {\sl \(R\) is a normal subgroup of \(G\).} 

Recalling that every prime of \(\alpha\) is not adjacent to \(q\) in \(\Delta(G)\), Proposition~\ref{nilpotency}(b) yields that there exists an \(\alpha\)-Hall subgroup \(A\) of \(G\), and \(A\) is abelian. Observe also that \(AK\) is a normal subgroup of \(G\), as $KP = [G, P] P \nor G$ for every $P \in \syl pG$ and $p \in \alpha$. Choosing (again) \(L\trianglelefteq G\) such that \(K/L\) is a chief factor of \(G\), for our purposes (and for this step only) we can clearly assume that the \(r\)-subgroup \(L\) is trivial. By Proposition~\ref{gamma}, we know that \(K\) has a complement \(H\) in \(G\), and this \(H\) can be chosen to contain \(A\), so that \(A=AK\cap H\) is a normal subgroup of \(H\). Setting \(A_0=A\cap\cent H K\) we observe that, for every \(p\in\alpha\), we have  \(\oh p {A_0}\trianglelefteq H\) because \(A_0\trianglelefteq H\); but \(\oh p {A_0}\) is clearly normalized by \(K\) as well, so we have \(\oh p {A_0}\leq\oh p G\). Now, Proposition~\ref{gamma}(e) yields that \(\oh p{A_0}\) lies in \(\zent G\) and, as this holds for every choice of \(p\in\alpha\), we deduce that \(A_0\leq\zent G\); in particular, \(A_0\) centralizes a Sylow \(r\)-subgroup \(R_0\) of \(\cent H K\). Recalling that \(H/\cent H K\) is an \(r'\)-group (because \(\o G/\fit{\o G}\) is a \(q\)-group by step 1), we have that \(R_0K\) is a Sylow \(r\)-subgroup of \(G\), and it is enough to show that \(R_0\) is normal in \(\cent H K\) (thus in \(H\)) in order to get \(R_0K\trianglelefteq G\). But the normality of \(R_0\) in \(\cent H K\) follows at once from the fact that \(\cent H K\) is \(q\)-nilpotent, with normal \(q\)-complement \(R_0\times A_0\).

\smallskip
{\bf{Step 5.}} {\sl We have \(\fit G=R\).} 

Let \(U\) be a complement for \(R\) in \(G\): we have to show that \(\fit G\cap U=1\). Setting \(S=\fit G\cap U\), our first remark is that \(S\) lies in \(\zent G\). In fact, \(S\) is certainly normal in \(G\); thus, writing \(S=S_q\times S_{\alpha}\) as a direct product of its Sylow \(q\)-subgroup and its Hall \(\alpha\)-subgroup, we have that both \(S_q\) and \(S_{\alpha}\) are normal in \(G\). But \(G\) is \(q\)-nilpotent with abelian Sylow \(q\)-subgroups, therefore \(S_q\) is central in \(G\). On the other hand, considering the usual normal subgroup \(L\) of \(G\) such that \(K/L\) is a chief factor of \(G\), we have that \(S_{\alpha}L/L\) is a normal subgroup of \(G/L\) intersecting \(K/L\) trivially, so Proposition~\ref{gamma}(d) yields \([S_{\alpha},Q]\leq L\) where \(Q\) is a Sylow \(q\)-subgroup of \(G\). Now, \([S_{\alpha},Q]\leq L\cap S_{\alpha}=1\), thus \(S_{\alpha}\) is centralized by a Sylow \(q\)-subgroup of \(G\). Since \(G\) has abelian Hall \(\alpha\)-subgroups and \(S_{\alpha}\) centralizes \(R\), we conclude that \(S_{\alpha}\) lies in \(\zent G\) as well, so \(S\leq \zent G\). This step can be concluded by observing that no prime divisor of \(|S|\) can divide \(|G'\cap\zent G|\), because \(G\) has abelian Sylow subgroups for each of these primes (see \cite[Theorem~5.3]{I}); as a consequence, \(G'\cap S=1\), and our assumption that \(G\) is a reduced group forces \(S=1\). Thus, we proved claim~(a) of our statement.  

\smallskip
{\bf{Step 6.}} 
The last step is devoted to the proof of claim (b). We start by observing that, for every prime \(p\) in \(\alpha\) and \(P\in\syl p G\), we have \(K=[R,P]\). In fact, we know that \(K=[K,P]\leq[R,P]\); on the other hand, \([R,P]\) is a \(p'\)-subgroup of \([G,P]\), and it is therefore contained in the normal \(p\)-complement \(K\) of \([G,P]\). Taking into account that, as remarked in step 4, an \(\alpha\)-Hall subgroup \(A\) of \(G\) is abelian, we thus get \(K=[R,A]\). Also, an application of \cite[Proposition~3.1]{CDPS13} to the factor group \(G/\Phi\) (recall that here \(\Phi\) is defined as \(\frat R\)) yields that \(K\Phi/\Phi\) is a minimal normal subgroup of \(G/\Phi\), so \(L\) can be chosen to be \(\Phi\cap K\), and \(K\Phi/\Phi\) is isomorphic to \(K/L\) as a \(G\)-module. Now, by Fitting's decomposition we have \(R/\Phi=K\Phi/\Phi\times Z/\Phi\), where \(Z/\Phi\) is set to be \(\cent{R/\Phi}A\) (note that \(Z\) is a normal subgroup of \(G\), as \(AR\nor G\)); but since \(Z/L\) is a normal subgroup of \(G/L\) intersecting \(K/L\) trivially, Proposition~\ref{gamma}(d) yields that \([Z,Q]\leq L\) (where \(Q\) is a Sylow \(q\)-subgroup of \(G\)) and, in particular, \(Q\) centralizes \(Z/\Phi\). We conclude that \(Z/\Phi\) lies in \(\zent{G/\Phi}\) (in fact, equality clearly holds), and \(\cent G{K/L}=\cent G {K\Phi/\Phi}=\cent G{R/\Phi}=R\). Now all the remaining claims in (b) follow by the description of \(\o G=G/R\) that we made in the previous parts of this proof.
\end{proof}

\begin{rem} \label{rem} Assume that \(G\) is a reduced group satisfying the hypothesis of the previous result, so, \(\Delta(G)\) has a cut vertex \(r\) and \(G\) does not have any normal Sylow subgroup except (eventually) for the prime \(r\). We can summarize the conclusions of Theorem~\ref{completecut1}, taking into account the notation introduced therein, as follows. 

Writing \(Z/\frat R\) for the center of \(G/\frat R\), the factor group \(G/Z\) is isomorphic to a subgroup of the affine semilinear group \({\rm A}\Gamma(R/Z)\). Also, the Fitting subgroup of \(G/Z\) is \(R/Z\) and, if \(F/Z\) is the second Fitting subgroup of \(G/Z\), then \(F/R\) is the cyclic Hall \(\alpha\)-subgroup of \(G/R\); as for the top section \(G/F\), it is a group of order~\(q\). Furthermore, we observe that \(\Delta(G/Z)\) is the same as \(\Delta(G)\).

So, the groups appearing as an output in conclusion {\bf(I)} of Theorem~B are well understood (at least as concerns the section over the Frattini subgroup of their normal Sylow \(r\)-subgroup), and they are essentially certain groups of affine semilinear maps.
\end{rem} 

\bigskip
We are now ready to prove Theorem~B, that was stated in the Introduction.

\begin{proof}[Proof of Theorem~B]

 We start by assuming that $G$ is a reduced group whose graph $\Delta(G)$ has a cut vertex $r$ and, as usual, we denote by \(\alpha\) and \(\beta\) the vertex sets of the two complete connected components of \(\Delta(G)-r\). By Theorem~A, we know that $G$ is solvable. 
  Let $A \in \hall{\alpha}G$, $B \in \hall{\beta}G$ and $R \in \syl rG$ be such that $ AB$ and $AR$ are subgroups of $G$.
  Since no vertex of $\alpha$ is adjacent in $\Delta(G)$ to any vertex of $\beta$,  Proposition~\ref{nilpotency}
  yields that both $A$ and $B$ are abelian. 
  
 \smallskip 
 Recalling that $\nu(G)$ is the set of the prime divisors $t$ of $|G|$ such that $G$ has a normal Sylow $t$-subgroup, let us first assume that $\nu(G) \cap (\alpha \cup \beta) = \emptyset$. Our aim is to show that conclusion \({\bf (I)}\) holds in this case. By Theorem~\ref{completecut1}, we have that \(R=\fit G\); moreover, \(H = AB\) has a Sylow \(q\)-subgroup $B$ of order \(q\), where $\{ q \} = \beta$, and a cyclic normal \(q\)-complement \(A=\fit H\). 

  Now, assume that \(x\in R\) does not centralize any conjugate  \(A^y\) with  \( y \in R\) (hence, any \(G\)-conjugate of \(A\) at all). As a consequence, there exists a prime \(p\in\pi(A)\) which divides the size of \(x^G\). Since, as remarked above, \(p\) is not adjacent to \(q\), certainly \(x\) is centralized by a Sylow \(q\)-subgroup of \(G\); moreover, if there exists an element \(w\) in \(\cent A x\setminus\zent H\), then some prime in \(\pi(H)\) has to divide \(|w^H|\), and this prime is certainly \(q\) because \(A\) is abelian. But now \(q\) divides \(|w^G|\) as well (because \(H\) is isomorphic to \(G/R)\), so \(pq\) divides \(|(xw)^G|\), a contradiction. We deduce that \(\cent A x\) lies in \(\zent H\), and we get case {\bf(I)}.

  \smallskip
  Assume now, by the  symmetry of $\alpha$ and $\beta$,  that there exists a prime $ t \in \nu(G) \cap \alpha$.
  Note that this implies, by Proposition~\ref{nilpotency}(a), that $G$ is $q$-nilpotent for all $q \in \beta$. Hence, the $\beta$-complement $AR$ is a normal subgroup of $G$, and $G' \leq AR$. 

  Observe first that $\nu(G) \cap \beta = \emptyset$. In fact, if $q \in \nu(G) \cap \beta$, then
  $G = \cent GT \cup \cent GQ$, where $T \in \syl tG$ and $Q \in \syl qG$, which is not possible.
  Next, we claim that \(\alpha\subseteq\nu(G)\). In fact assume, working by contradiction, that \(\pi=\alpha\setminus\nu(G)\) is non-empty; then, as shown in step 5 of the proof of \cite[Theorem~A]{DPSS}, we have \(K_q(G)<K_p(G)\) for all \(q\in\beta\) and \(p\in\pi\).  
  Also, Proposition~\ref{pieces} yields that there exists \(K\nor G\) such that $K_q(G) = K$ for all $q \in \beta$.
  In particular, this implies that $\pi(K) \subseteq \nu(G) \cup \{ r \}$. 
  Let now $L \leq K$ be a normal subgroup of $G$ such that $K/L$ is a chief factor of $G$ and let
  $\o G = G/ \cent G{K/L}$. Observe that, as the Fitting subgroup of $G$ centralizes every chief
  factor of $G$,  the group $\o G$ is a $\nu(G)'$-group. 
  By Proposition~\ref{gamma}(b), for all $p \in \pi$ the Sylow $p$-subgroup $\o P$ of $\o G$ intersects
   $\fit {\o G}$ trivially, and $\o{B}$ acts fixed-point freely on $K/L$. As $\o B$ is central in $\o G$ (because \(G\) is \(q\)-nilpotent for every \(q\) in \(\beta\)) and $\o A$ is
  abelian, it follows that $K_p(\o{G}) = [\o G, \o P]$ is an $r$-group. Hence, $r$ does not divide $|K/L|$, so
    $K/L$ is a $t$-group for some $t \in \nu(G)$. For any non-trivial $ xL \in K/L$, we have
    $\pi(\o B ) \subseteq \pi_G(x)$, so $x$ is centralized by a Sylow $p$-subgroup $P_0$
    of $G$. Since $P_0$ is not contained in $\cent G{K/L}$, there exists $ y \in  P_0$ such that $t \in \pi_G(y)$, and hence 
    $\pi_{G}(xy)$ contains both  $\pi(\o B)$ and $t$, a contradiction. 

   Hence, $\alpha \sbs \nu(G)$ and  $A$ is a normal subgroup of $G$. 
   We will show, next, that either $R$ or $AB$ is a normal subgroup of $G$.
   We first observe that, for every $q \in \beta$, there exists $Q \in \syl qG$ such that  $RQ$ is a subgroup of $G$. As $G_0 = PRQ$ is isomorphic to
   a normal section of $G$, the graph $\Delta(G_0)$ is a subgraph of $\Delta(G)$ so, in particular, \(\{p,q\}\) is not an edge of \(\Delta(G_0)\).
   As both $P$ and  $PR$ are normal subgroups of $G_0$,  by Lemma~\ref{three}, $R$ commutes with either $P$ or $Q$; in the first case $R$ is normal in $G_0$ and in the second case $PQ$ is normal in $G_0$.
   Thus the subgroup $AB$ is non-abelian; otherwise,  either $P$ or $Q$ would be central in $G$, a contradiction.
   So, by a suitable choice of $p \in \alpha$ and  $q \in \beta$, we can assume that $[P, Q] \neq 1$. Since $PQ$ is either a normal subgroup of $G_0$ or isomorphic to a quotient of $G_0$, there are elements $x \in P$ and $y \in Q$ such that $q \in \pi_{G_0}(x) \subseteq \pi_G(x)$ and
   $p \in \pi_{G_0}(y) \subseteq \pi_G(y)$.  
   Assume first that $[R,P] = 1$ (so $R \nor G_0$) and let $t \in \alpha$ and $T \in \syl tG$.
   If $[R, T] \neq 1$, we consider  $w \in R$, $w \not\in \cent RT$ and get
   $\{t, q\} \subseteq \pi_G(xw)$, a contradiction. So, in this case, $R$ commutes with $A$ and
   hence $R$ is a normal subgroup of $G$.

   Assume, on the other hand, $[R, Q] = 1$.
   Let $\o G = G/A$. If $[\o R, \o B] \neq 1$, then there is $w \in R$ and $t \in \beta$ such that $t \in \pi_{\o G}(\o w)$.
   Thus, $\{t, p\} \subseteq \pi_G(yw)$, a contradiction. Therefore, in this case,
   $G/A \simeq R \times B$ and $AB$ is the normal $r$-complement of $G$. 

   \smallskip
   We now suppose that $R$ is normal in $G$. Hence, $AR = A \times R = \fit G$, because
   $B \cap \fit G \leq \zent G$ has trivial intersection with $G'$ and $G$ is reduced.
Let $Z = \zent {AB}$ and  note that $Z \cap A=\cent A B$ is (by Fitting's decomposition) a central direct factor of $G$, so $Z = \oh{\beta}{AB} \leq B$  as $G$ is reduced. Note that $Z < B$, as otherwise $A$ would be central in $G$. 
   
Let $b \in B \setminus Z$ and $a \in \cent Ab$.
If $a \neq 1$, then there exists $q \in \beta$ such that $q \in \pi_G(a)$.
Also, there is $p \in \alpha$ such that $p \in \pi_G(b)$, so we get the contradiction
$\{p, q \} \sbs \pi_G(ab)$.
Hence $AB/Z$ is a Frobenius group, with kernel $AZ/Z$.

If $[R, B] = 1$, then  $G = R \times AB$ and $R$ is non-abelian; so we are in case {\bf(IIb(i))}.
 
If $R$ is abelian, then $\cent RB$ is a central direct factor of $G$ and hence $\cent RB = 1$ as
$G$ is reduced. So, for every non-trivial $x\in R$ we have $\pi_G(x) \cap \beta \neq \emptyset$ and
hence $\cent Bx \leq \cent BA = Z$ by Lemma~\ref{product}. Note also that in this case $Z \neq 1$,
as otherwise the graph $\Delta(G)$ would be disconnected by Proposition~\ref{disconnected}.
Finally, as $\cent BR \leq Z$ we see that $\cent BR \leq \zent G$. Since $B \cap G' = 1$ and $G$
is reduced, we see that $\cent BR = 1$.
Thus, we have case {\bf(IIa)}. 

Assume now that $R$ is non-abelian and that $[R, B] \neq 1$.
Consider  an element $x \in R$ such that $\cent Gx  R < G$, i.e. such that $\cent Gx$ does
not contain any conjugate of $B$ in $G$. Then there exists a prime $q \in \beta$ such that
$q \in \pi_G(x)$ and again Lemma~\ref{product} implies that $\cent Bx \leq Z$. So, we have case {\bf(IIb(ii))}.

\smallskip
For the last case, assume  that $AB$ is the normal $r$-complement of $G$.
By the Frattini argument we can choose $R \leq \norm GB$; therefore, $BR$ is a subgroup of $G$ and, since \(AR\nor G\), we have \(R=AR\cap RB\nor BR\). As a consequence, \(B\) and \(R\) are direct factors of \(BR\) (i.e., \(BR\) is nilpotent).  
Let $R_0 = \oh rG$, and observe
that we can assume that $R_0 < R$, as otherwise $G = R \times AB$ and we are again in case {\bf(IIb(i))}.
As above we observe that, as $G$ is reduced, we have $\fit G = A \times R_0$. So, $R_0 = \cent {BR}A$.
Write $A = A_0 \times C$, where $A_0 = [A,B]$ and  $C = \cent AB$. 
We show that $A_0BR/R_0$ is a Frobenius group. In fact, if $x \in A_0\setminus\{1\}$,
then $q \in \pi_G(x)$ for some $q \in \beta$, and hence Lemma~\ref{product} implies that
$\cent {BR}x \leq \cent{BR}A = R_0$.
Moreover, if $R$ is abelian, then
$R_0 = 1$ as $G$ is reduced. Hence, if $C = 1$, then $\Delta(G)$ would be disconnected by Proposition~\ref{disconnected}, against our assumptions, and we reached conclusion {\bf(III)}.

\bigskip
We now start proving the ``if part" of Theorem~B.
We recall that, if $\pi_1, \pi_2, \ldots, \pi_n$ are disjoint sets of primes and $g$ is an element of $G$,
one can uniquely write $g = g_{\pi_1}g_{\pi_2}\cdots g_{\pi_n}$, where each $g_{\pi_i}$ is a
$\pi_i$-element and a power of $g$; we call this the \emph{standard decomposition} of $g$ (with respect to $\pi_1, \pi_2, \ldots, \pi_n$). Note that then $\cent Gg = \bigcap_{i = 1}^n \cent G{g_{\pi_i}}$.

\medskip
Let us assume {\bf(I)}: in this case $B = Q$ is a Sylow \(q\)-subgroup of $G$.  
We first show that \(q\) is not adjacent in \(\Delta(G)\) to any prime in $\alpha$. 

What we have to prove is that, for a fixed \(p\in\pi(A)\) and \(g\in G\), the size of \(g^G\) is not divisible by \(pq\).
We have the standard decomposition   \(g=g_rg_{\alpha}g_q\),  where we can assume, up to conjugation in \(G\), that  \(g_r\in R\), \(g_{\alpha}\in A\) and \(g_q\in Q_0\), for some $Q_0 \in \syl qG$. If \(g_q\neq 1\), then \(\langle g_q\rangle=Q_0\) (recall that \(|Q_0|=q\)) centralizes \(g\), therefore \(q\nmid |g^G|\). To the end of showing that \(|g^G|\) is not divisible by \(pq\) we will therefore assume \(g_q=1\). 

Let us consider the case when \(g_r\) is centralized by a conjugate \(A^v\) of \(A\), with \(v\in R\). Since \(g_{\alpha}\) is a \(\pi(A)\)-element of \(\cent G {g_r}\) and \(A^v\) is a Hall \(\pi(A)\)-subgroup of \(\cent G {g_r}\), there exists \(c\in \cent G {g_r}\) such that \(g_{\alpha}\) lies in \(A^{vc}\). But \(A^{vc}\) is abelian, so \(g_{\alpha}\) is centralized by \(A^{vc}\), as well as \(g_r\). The conclusion is that \(g=g_rg_{\alpha}\) is centralized by the Hall \(\pi(A)\)-subgroup \(A^{vc}\) of \(G\), whence \(p\nmid |g^G|\) and we are done in this case.

The last situation that has to be considered is when \(g_r\) is not centralized by \(A^v\) for any \(v\in R\). Set $H = AB$.  Then, by our assumptions, a \(G\)-conjugate \(Q^u\) of \(Q\) lies in \(\cent G {g_r}\), and \(\cent A {g_r}\leq\zent H\); in particular, we get \(g_{\alpha}\in\zent H\), thus \(o(g_{\alpha})\mid|\zent H|\). Choose now an \(r\)-complement \(H_1\) of \(\cent G {g_r}\) which contains \(Q^u\), and let \(A_1\) be the (cyclic) \(\pi(A)\)-Hall subgroup of \(H_1\). Since \(g_{\alpha}\) is a \(\pi(A)\)-element of \(\cent G {g_r}\), there exists \(c\in\cent G {g_r}\) such that \(g_{\alpha}\) lies \(A_1^c\). Observe that \(o(g_{\alpha})\) divides the order of \(\zent {H_1^c}\leq A_1^c\) and, \(A_1^c\) being cyclic, its unique subgroup of order \(o(g_{\alpha})\) (i.e., \(\langle g_{\alpha}\rangle\)) is forced to lie in \(\zent{H_1^c}\). We conclude that \(g_{\alpha}\) lies in \(\zent{H_1^c}\), and therefore \(g_{\alpha}\) is centralized by \(Q^{uc}\). But \(Q^u\) lies in \(\cent G {g_r}\), so the same holds for \(Q^{uc}\) (recall that \(c\in \cent G {g_r}\)) and \(Q^{uc}\) centralizes \(g_r\) as well. As a consequence, in this situation the size of the conjugacy class of \(g=g_rg_{\alpha}\) in \(G\) is not divisible by \(q\). 

So we finished the proof that \(q\) is not adjacent in \(\Delta(G)\) to any prime in \(\alpha\), which also implies (by Proposition~\ref{nilpotency}(b)) that the vertices in \(\alpha\) are pairwise adjacent in \(\Delta(G)\).

Finally, we observe that $r$ is a complete vertex of $\Delta(G)$. In fact, assuming the contrary, our graph would have no complete vertices, and therefore \(G\) would be metabelian by Theorem~C of \cite{CDPS12}. But this is not the case, as \(G\) has Fitting height~\(3\). We conclude that \(r\) is a cut vertex of \(\Delta(G)\) and we are done. 
 
 \smallskip
Let us assume now case {\bf(II)}: $\fit G = A \times R$, $Z = \zent{AB} <B$, and $AB/Z$ is a Frobenius group with kernel \(AZ/Z\) (note that \(Z= \cent BA\) and \(AZ/Z\simeq A\)).  

\smallskip
{\bf(IIa)} ($R$ is abelian, $\cent BR = 1$ and $\cent Bx \leq Z \neq 1$ for every non-trivial $x \in R$).
Note that, as $G$ is reduced, the vertex set of $\Delta(G)$ is $\alpha \cup \beta \cup \{ r \}$. 
We first show that, for $p \in \alpha$ and $q \in \beta$, $p$ and $q$ are non-adjacent in $\Delta(G)$. In fact, 
let $g \in G$ and consider the standard decomposition $g = g_{\alpha}g_r g_{\beta}$, with $g_{\alpha} \in A$, $g_r \in R$ and, up to
conjugation, $g_{\beta} \in B$. Assuming that \(pq\) divides \(|g^G|\), we clearly have \(p\in\pi_G(g_{\beta})\), which implies \(g_{\beta}\not\in Z\). Since \(AB/Z\) is a Frobenius group with kernel \(AZ/Z\), and \(g_{\beta}\) commutes with \(g_{\alpha}\), we deduce that \(g_{\alpha}\) must be trivial and so \(g_r\neq 1\) (otherwise \(g=g_{\beta}\) would not lie in a conjugacy class having size divisible by \(q\)). But now we get \(g_{\beta}\in\cent B{g_r}\leq Z\), a contradiction.
As in case {\bf(I)}, this also implies that both \(\alpha\) and \(\beta\) induce complete subgraphs of \(\Delta(G)\).
Finally, we observe that $\Delta(G)$ is connected by Proposition~\ref{disconnected}, so $r$ is a cut vertex of $G$, as wanted. 

Note also that, as easily seen, every element in \(B\setminus Z\) has a $G$-conjugacy class size divisible by \(r\) and by all the primes in \(\alpha\), therefore \(\alpha\cup\{r\}\) induces a complete subgraph of \(\Delta(G)\).

\smallskip
{\bf(IIb(i))} ($G = R \times AB$). 
In this case, it is clear that $\Delta(G)$ is the join of a graph with one vertex $r$ and a
disconnected graph with connected components of vertex sets $\alpha$ and $\beta$.

\smallskip
{\bf(IIb(ii))} ($R$ is non-abelian, and $\cent Bx \leq Z$ for all $x \in R$ with $\cent Gx R <G$).
Let $g \in G$, and write \(g\) in its standard decomposition as $g_{\alpha}g_r g_{\beta}$, with $g_{\alpha} \in A$, $g_r \in R$ and, up to
conjugation, $g_{\beta} \in B$. Assume, working by contradiction, that  $\{p, q\} \subseteq \pi_G(g)$
for some $p \in \alpha$ and $q  \in \beta$;
then $p \in \pi_G(g_{\beta})$. Thus we have $g_{\beta} \not\in Z$, and hence $g_{\alpha} = 1$, because \(g_{\alpha}\) commutes with \(g_{\beta}\) and $AB/Z$ is a Frobenius group with kernel \(AZ/Z\). But also \(g_r\) commutes with \(g_{\beta}\), therefore, by our assumptions, we have \(\cent G{g_r}R=G\); in particular, there exists a Hall \(\beta\)-subgroup \(B_0\) of \(G\) lying in \(\cent G{g_r}\). Now, \(g_{\beta}\) is a \(\beta\)-element of \(G\) contained in \(\cent G{g_r}\), and so there exists \(c\in\cent G{g_r}\) such that \(g_{\beta}\) lies in \(B_0^c\) (which is abelian). As a consequence, \(B_0^c\) centralizes \(g=g_rg_{\beta}\), and in particular \(q\not\in\pi_G(g)\), contradicting our assumptions. As in case {\bf(I)}, \(G\) being not metabelian, \(r\) is a complete vertex of \(\Delta(G)\) and it is therefore a cut vertex of \(\Delta(G)\), as wanted.

\smallskip
Let us assume the last case {\bf(III)}: \(BR\) is a nilpotent subgroup of \(G\); also, $\fit G = A \times R_0$, with $R_0 < R$, $\cent AR = 1$, 
and $[A,B]BR/R_0$ is a Frobenius group with kernel \([A,B]R_0/R_0\). In the case when $R$ is abelian, in addition we have $R_0 = 1$ and $C = \cent AB \neq 1$.

As before, let $g \in G$ and consider the standard decomposition $g = g_{\alpha} g_{\{ r \} \cup \beta}$, with $g_{\alpha} \in A$,  and, up to
conjugation,  $g_{\{ r \} \cup \beta} \in BR$. Assume, working by contradiction, that  $\{p, q\} \subseteq \pi_G(g)$,
for some $p \in \alpha$ and $q  \in \beta$.
Then $p \in \pi_G(g_{\{ r \} \cup \beta})$.
As $A = [A,B] \times C$, write  also $g_{\alpha} = g_0  g_1$ with $g_0 \in [A, B]$ and $g_1 \in C$, and note that \(g_{\{ r \} \cup \beta}\) centralizes both \(g_0\) and \(g_1\), because \([A,B]\) and \(C\) are normal subgroups of \(G\). 
Since $g_{\{ r \} \cup \beta} \not\in R_0 = \cent{BR}A$, our assumptions imply that $g_0 = 1$, so
$g_{\alpha} \in C$ and hence \(q\not\in\pi_G(g)\), a contradiction. As usual, what we proved implies also that both \(\alpha\) and \(\beta\) induce complete subgraphs of \(\Delta(G)\). Finally we observe that, by Proposition~\ref{disconnected}, $\Delta(G)$ is connected both when
$R$ is non-abelian (in which case \(r\), as in {\bf (I)}, is a complete vertex of \(\Delta(G)\)) and when $R$ is abelian; in fact, in the latter case, we get that $\zent G = 1$ and $G$ is not a Frobenius group. Thus $r$ is a cut vertex of $\Delta(G)$, and the proof is complete.

We also note that every non-trivial element in \([A,B]\) has a $G$-conjugacy class size divisible by \(r\) and by all the primes in \(\beta\), therefore \(\beta\cup\{r\}\) induces a complete subgraph of \(\Delta(G)\). 
\end{proof} 

\section{Discussion of the cases of Theorem~B, and proof of Theorem~C}

Next, we take time for a closer look at the groups that appear in Theorem~B, also deriving some more detailed information about the associated graphs. As a consequence of this discussion, we will get Theorem~C (see Remark~\ref{thmC}). We will also determine, in Example~\ref{ex}, which \(1\)-connected graphs can occur as \(\Delta(G)\) for a finite group \(G\).

So, let \(G\) be a reduced group such that \(\Delta(G)\) has a cut vertex \(r\). As in Theorem~B, we denote by \(\alpha\) and \(\beta\) the vertex sets of the two connected components of the graph \(\Delta(G)-r\) (the description being given up to interchanging \(\alpha\) and \(\beta\)). 

First of all we stress that, in this setting, the groups as in {\bf(I)} are characterized by the fact that they have a normal Sylow subgroup only for the prime \(r\). The structure of these groups has been already discussed in Remark~\ref{rem}, and we do not comment further on that.

As regards the groups in classes {\bf (II)} and {\bf(III)}, they share the property of having normal Sylow subgroups for all the primes in \(\alpha\), whereas the Sylow subgroups for the primes in \(\beta\) are all non-normal. If, in this situation, the group has an abelian normal Sylow \(r\)-subgroup, then it lies in {\bf(IIa)}; if it has a non-abelian normal Sylow \(r\)-subgroup, then we are in case {\bf (IIb)}. On the other hand, if the group does not have a normal Sylow \(r\)-subgroup, then it belongs to class {\bf(III)}. 

Some more remarks:

\medskip
${\bullet}$ 
For a group \(G\) as in {\bf(IIa)}, the cut vertex \(r\) need not be a complete vertex of \(\Delta(G)\). If it is not, as observed in Theorem~A, the graph \(\Delta(G)\) has diameter \(3\). 

More specifically, \(r\) is adjacent to all the primes in \(\alpha\), but it can be non-adjacent to some prime in \(\beta\): in order to have a better understanding of \(\Delta(G)\) in this case, we characterize next the set \(\beta^*\subseteq\beta\) of the vertices of our graph that are non-adjacent to \(r\).

Let \(R\) be the Sylow \(r\)-subgroup of \(G\) and, for $q \in \beta$, let $Q$ be in $\syl qB$. We claim that $q$ lies in $\beta^*$ if and only if
$Q \leq Z= \cent BA$ and $B$ acts fixed point-freely on $[R,Q]$.
In fact, if $Q \not\leq Z$, then $q \in \pi_G(x)$ for some element $x\in A$.
Consider a non-trivial element $y \in Z$ (recall that  $Z \neq 1$); then $r \in \pi_G(y) $ and hence $\{r, q\} \sbs \pi_G(xy)$.
If, on the other hand, there exist non-trivial and commuting elements $x \in [R, Q]$ and $y \in B$, then $\pi_G(xy) \supseteq \pi_G(x) \cup \pi_G(y) \supseteq \{r,q\}$ (recall that $\cent BR = 1$).

Conversely, let $g = g_{\alpha}g_rg_{\beta}$ be the standard decomposition of \(g\), where we can assume, up to conjugation,  $g_{\alpha} \in A$, $g_r \in R$ and $g_{\beta} \in B$. Assume that
$\{r, q\} \sbs \pi_G(g)$ and that $B$ acts fixed point-freely on $[R,Q]$. As $r \in \pi_G(g)$, then $g_{\beta} \neq 1$; so, using the Fitting decomposition of  the abelian group $R$ with respect to the action of $Q$, we get $g_r \in \cent RQ$. Thus $q \in \pi_G(g)$ implies $Q \not\leq \cent B{g_{\alpha}}$, and hence
$Q \not\leq Z$.

\medskip
${\bullet}$ For the groups in {\bf (IIb)} (as well as for those as in {\bf (I)}), the cut vertex $r$ is a complete vertex of $\Delta(G)$.

\medskip
$\bullet$ Finally, let \(G\) be as in {\bf(III)}. Then the cut vertex \(r\) is always adjacent in \(\Delta(G)\) to all the vertices in \(\beta\), and it is a complete vertex if a Sylow \(r\)-subgroup \(R\) of \(G\) is non-abelian. On the other hand, if $R$ is abelian, \(r\) can be non-adjacent to some prime in \(\alpha\) (and, if this happens, then \(\Delta(G)\) has diameter \(3\)): as we did for class {\bf(IIa)}, we characterize next the set \(\alpha^*\subseteq\alpha\) of the vertices of \(\Delta(G)\) that are non-adjacent to \(r\) in this case.


 For $p  \in \alpha$ and  $P \in \syl pA$,  we  show that $p \in \alpha^*$ if and only if $P \leq C=\cent AB$ and $\cent Rx \leq \cent RP$ for all non-trivial $x \in A$.
 In fact, if $P \not\leq C$, then there exists $y \in B$ such that $p \in \pi_G(y)$. Considering a non-trivial $x \in C$, we have $r \in \pi_G(x)$ (as $\cent AR = 1$) and
 hence $\{p, r \} \sbs \pi_G(xy)$. If, on the other hand, there exist non-trivial elements $y \in R \setminus \cent RP$ and $x \in \cent Ay$, then again $\{p, r \} \sbs \pi_G(xy)$.

Conversely, let $g = g_{\alpha}g_rg_{\beta}$ be the standard decomposition of \(g\), where we can assume, up to conjugation,  $g_{\alpha} \in A$, $g_r \in R$ and $g_{\beta} \in B$. Assume that
$\{p, r \} \sbs \pi_G(g)$ and that $\cent Rx \leq \cent RP$ for all non-trivial $x \in A$. As $R$ does not centralize $g$, then $g_{\alpha} \neq 1$ and hence $g_r \in \cent RP$.
So $g_{\beta} \not\in \cent GP$ and hence $P \not\leq C$.

\begin{rem}\label{thmC}
Observe that Theorem~C is an immediate consequence of (Theorem~B and) the analysis carried out above. In fact, the reduced groups whose related graph has two cut vertices are easily seen to be those lying in class {\bf(IIa)} such that the second cut vertex \(t\) is the unique element in \(\beta\setminus\beta^*\) (or, equivalently, the groups lying in class {\bf (III)} such that the second cut vertex \(t\) is the unique element in \(\alpha\setminus\alpha^*\)).
\end{rem}

We close this section by showing that \emph{every $1$-connected graph which is covered by two complete subgraphs does in fact occur as the graph \(\Delta(G)\) for a suitable group~$G$.} (Conversely, every graph of the kind \(\Delta(G)\) which has a cut vertex is \(1\)-connected, as observed in the Introduction, and it is covered by two complete subgraphs by Theorem~\ref{ultimo}.)

\begin{example}\label{ex}
Let $n,  m_1$ be positive integers and $m_0$ a non-negative integer.
Let $b_0 = q_1q_2\cdots q_{m_0}$ and
$b_1 = t_1t_2\cdots t_{m_1}$ where the $q_i$ and the $t_j$ are distinct primes
(meaning also $q_i \neq t_j$, for all $i, j$).
Let $r$, $p_1, p_2, \ldots, p_n$ be distinct primes such that
$r \equiv 1 \pmod{b_0b_1}$ and $p_i \equiv 1 \pmod{b_1}$
for all  $1 \leq i \leq n$; note that they exist by Dirichlet's Theorem on primes in an arithmetic progression.

Let $B_0$ and $B_1$ be cyclic groups of order $b_0$ and $b_1$, 
and $R$ and $A$ cyclic groups of order $r$ and $p_1p_2\cdots p_n$, respectively.
Consider the semidirect product $G = (A \times R) \rtimes (B_0 \times B_1)$
with respect to a Frobenius action of $B_0 \times B_1$ on $R$ and
of $B_1$ on $A$, while $B_0$ acts trivially on $A$.
Then it is easily seen that the graph $\Delta(G)$ is covered by two complete subgraphs (on the sets $\{r, p_1, \ldots, p_n\}$ and $\{q_1, \ldots, q_{m_0}, t_1,\ldots , t_{m_1}\}$), and that $r$ is a cut vertex of $\Delta(G)$ which is
adjacent exactly to the primes $\{ t_1,\ldots , t_{m_1}\}$ (see Figure~1).

\begin{figure}[h]
\label{example1}
   \centering
       {\includegraphics[width=.4\textwidth]{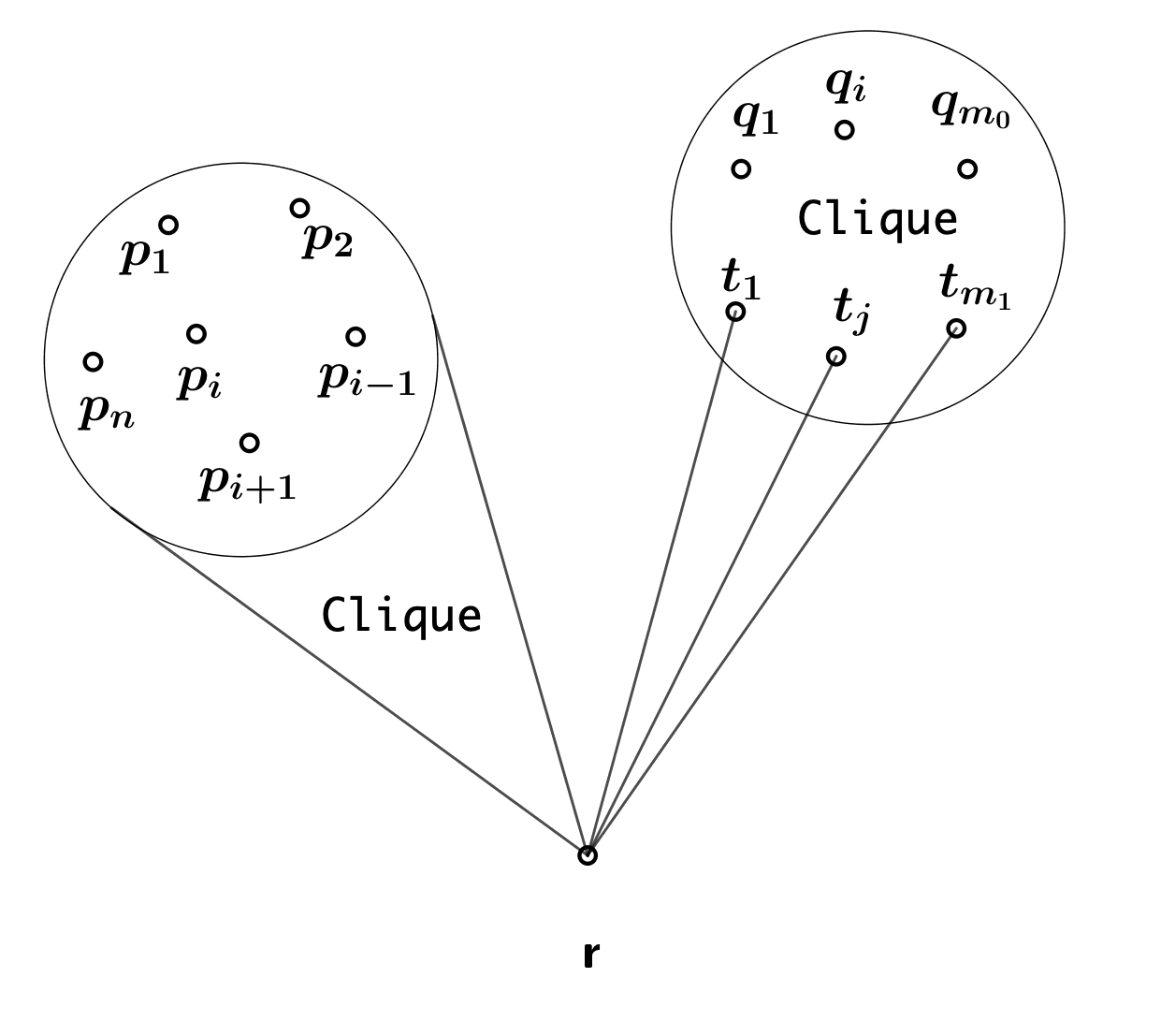}}
       \caption{Example 5.2}
\end{figure}

Observe that $r$ is complete if and only if $G = (A \times R) \rtimes  B_1$, and that there
are two cut vertices if and only if $m_1 = 1$. \end{example}



\section*{Acknowledgements}
This research has been carried out during a visit of the second and fourth authors at the Dipartimento di Matematica e Informatica ``Ulisse Dini" (DIMAI) of Universit\`a degli Studi  di Firenze.  They wish to thank the DIMAI for the hospitality. 


\end{document}